\title{Large subgraphs without short cycles}
\author{F. Foucaud\footnote{\noindent Department of Mathematics, University of Johannesburg, Auckland Park 2006, South Africa; LAMSADE - CNRS UMR 7243, PSL, Universit\'e Paris-Dauphine, 75775 Paris, France. florent.foucaud@gmail.com}\and M. Krivelevich\footnote{\noindent School of Mathematical Sciences, Raymond and Beverly Sackler Faculty of Exact Sciences, Tel Aviv University, Tel Aviv, 69978, Israel. krivelev@post.tau.ac.il. Research supported in part by USA-Israel BSF Grant 2010115 and by grant 912/12 from the Israel Science Foundation.} \and G. Perarnau\footnote{\noindent School of Computer Science, McGill University, 845 Sherbrooke Street West, Montreal, Quebec, Canada H3A 0G4. guillem.perarnaullobet@mcgill.ca.}}
\date{\today}
\theoremstyle{plain}
\newtheorem{theorem}{Theorem}%[section]
\newtheorem{lemma}[theorem]{Lemma}
\newtheorem{proposition}[theorem]{Proposition}
\newtheorem{corollary}[theorem]{Corollary}
\newtheorem{observation}[theorem]{Observation}
\theoremstyle{definition}
\newtheorem{conjecture}[theorem]{Conjecture}
\newtheorem{definition}[theorem]{Definition}
\newtheorem*{acknowledgement}{Acknowledgements}
\theoremstyle{definition}
\newtheorem*{claim}{Claim}
\renewenvironment{proof}[1][Proof]{\begin{trivlist}
\item[\hskip \labelsep {\textit{#1}.}]}{\qed\end{trivlist}}
\newenvironment{proofof}[1][Proofof]{\begin{trivlist}
\item[\hskip \labelsep {\textit{#1}}.]}{\qed\end{trivlist}}
\newcommand{\rst}[1]{\ensuremath{{\mathbin\upharpoonright}%
\raise-.5ex\hbox{$#1$}}} % creates restriction symbol
\newcommand{\A}{\mathcal{A}}
\newcommand{\F}{\mathcal{F}}
\newcommand{\Feven}{\mathcal{F}_r^{\text{even}}}
\newcommand{\G}{\mathcal{G}}
\newcommand{\E}{\mathbb{E}}
\renewcommand{\P}{\mathcal{P}}
\newcommand{\eps}{\varepsilon}
\newcommand{\Bin}{\mbox{Bin}}
\newcommand{\ex}{\text{ex}}
\begin{document}

\pagenumbering{arabic}

\setcounter{section}{0}

\maketitle

\onehalfspace

\begin{abstract}
We study two extremal problems about subgraphs excluding a family $\F$ of graphs. i) Among all graphs with $m$ edges, what is the smallest size $f(m,\F)$ of a largest $\F$--free subgraph? ii) Among all graphs with
minimum degree $\delta$  and maximum degree $\Delta$, what is the smallest
minimum degree $h(\delta,\Delta,\F)$ of a spanning $\F$--free subgraph
with largest minimum degree? These questions are easy to answer for
families not containing any bipartite graph. We study the case where $\F$ is composed of all even cycles of length at most $2r$, $r\geq 2$. In this case, we give bounds on $f(m,\F)$ and $h(\delta,\Delta,\F)$ that
are essentially asymptotically tight up to a logarithmic
factor. In particular for every graph $G$, we show the existence of subgraphs with arbitrarily high girth, and with either many edges or large minimum degree. These subgraphs are created using probabilistic embeddings of a graph into extremal graphs.
\end{abstract}

\section{Introduction}\label{sec:intro}

Let $G=(V,E)$ be a simple undirected graph with $|V|=n$ vertices. If
$F$ is a given graph, then we say that $G$ is $F$--free if there is no
subgraph of $G$ isomorphic to $F$. The problem of determining the
\emph{Tur\'an number} with respect to $n$ and $F$, i.e. the largest
size of an $F$--free graph on $n$ vertices, has been extensively
studied in the literature. This is the same as determining the size of
a largest $F$--free spanning subgraph of $K_n$, the complete graph on
$n$ vertices. The latter quantity is denoted by $\ex(K_n,F)$. We can extend this notion in a natural way: for every
graph $G$, let $\ex(G,F)$ denote the largest size of an $F$--free
subgraph of $G$. Also, for a family $\F$ of graphs, we say that $G$ is
$\F$--free if $G$ does not contain any graph from $\F$, and denote by
$\ex(G,\F)$ the largest size of an $\F$--free subgraph of $G$.

In this paper, we provide lower bounds for $\ex(G,\F)$ in terms of
different graph parameters of $G$.  If $\F$ does not contain any bipartite graph, it is
easy to provide tight bounds for $\ex(G,\F)$. Therefore, it is more
interesting to study the behavior of $\ex(G,\F)$ when $\F$ contains bipartite graphs. We mostly address the case of even cycles $\F=\{C_4,C_6,\dots,C_{2r}\}$, with $r\geq 2$.

In the first part of the paper, we derive a lower bound for $\ex(G,\{C_4,C_6,\dots,C_{2r}\})$ in terms of the size of $G$.
In the second part of the paper, we study the largest minimum degree of a $\{C_4,C_6,\dots,C_{2r}\}$--free spanning subgraph of $G$ in terms of the maximum and minimum degrees of $G$. In both cases, for every graph $G$, we show the existence of subgraphs with either many edges or large minimum degree, and arbitrarily high girth.

As far as we know, this is the first study of these extremal problems.\\[-2mm]

\noindent\textbf{Key definitions.} Let $F$ be a fixed graph. We define
$f(m,F)$ as the smallest possible size of a largest $F$--free subgraph
of a graph with $m$ edges, that is,
$$
f(m,F):= \min \{ \ex(G,F):\, |E(G)|=m\}\;.
$$
More generally, for a family $\F$ of graphs, we define
$$
f(m,\F):= \min \{ \ex(G,\F):\, |E(G)|=m\}\;.
$$

Our second key definition is $h(\delta,\Delta,F)$, the smallest
possible minimum degree of an $F$--free spanning subgraph $H$ of a
graph $G$ with minimum degree $\delta$ and maximum degree $\Delta$
such that $\delta(H)$ is maximized. If we define $d(G,F)=\max\{\delta(H):\, V(H)=V(G),\, H\subseteq G \text{ and } F\not\subset H\}$, then we can write
$$
h(\delta,\Delta,F):= \min\{d(G,F):\,\Delta(G)=\Delta \text{ and }  \delta(G)=\delta\}\;.
$$

As an illustration, consider the case $F=P_3$, the path on three vertices. Since every graph $H$ with minimum degree at least~2 contains a copy of $P_3$, for any $\delta$ and $\Delta$ with $\Delta\geq\delta\geq 2$, we have $h(\delta, \Delta, P_3)=1$.

Also, for a family $\F$ if we define $d(G,\F)=\max\{\delta(H):\, V(H)=V(G),\, H\subseteq G \text{ and } \forall F\in \F,\, F\not\subset H\}$, then
$$
h(\delta,\Delta,\F):= \min\{d(G,\F):\,\Delta(G)=\Delta \text{ and }  \delta(G)=\delta\}\;.
$$

\noindent\textbf{Thomassen's conjecture.} A related problem for the
girth and the average degree was stated by
Thomassen~\cite{t1983}. Similarly to our definitions of $f(m,\F)$ and
$h(\delta,\Delta,\F)$, we can define the smallest possible average
degree of an $\F$--free subgraph $F$ of a graph $G$ with average
degree $\overline{d}(G)\geq d$ such that $\overline{d}(F)$ is maximized.
Formally, if we define $d'(G,\F)=\max\{\overline{d}(F):\, F\subseteq G \text{ and } \forall F\in \F,\, F\not\subset F\}$, then:
$$
f'(d,\F):=  \min\{d'(G,\F):\,\overline{d}(G)\geq d\}\;.
$$

A reformulation of Thomassen's conjecture is to say that for every fixed $g\geq 4$, $f'(d,\{C_3,\ldots,C_{g-1}\})\to\infty$ when $d\to\infty$. If $g=4$, it is easy to check that $f'(d,\{C_3\})=d/2$.  For $g=6$, K{\"u}hn and Osthus~\cite{ko2004} showed that $f'(d,\{C_3,C_4,C_5\})=\Omega(\sqrt[3]{\log\log d})$, thus confirming the conjecture for this case.\\[-2mm]

Observe that one can change the average degree for the minimum degree in the definition of $d'(G,F)$, since a graph with average degree $\delta$ has a subgraph with minimum degree at least $\delta/2$.
It is interesting to notice that Thomassen's conjecture is true when restricted to graphs of large minimum degree $\delta$ with respect to the maximum degree $\Delta$. In this case, a relatively straightforward application of the local lemma shows that for any family of graphs $\F$ there exists a spanning $\F$-free subgraph with minimum degree $\Omega(\delta\cdot\Delta^{-m(\F)})$, where $m(\F)=\max_{F\in \F} \frac{|V(F)|-2}{|E(F)|-1}$.\footnote{See Lemma~4 of~\cite{ko2004} for a similar statement with $\F=\{C_4\}$.} One of the goals of this paper is to improve this lower bound when $\F$ is composed of cycles.

\noindent\textbf{Families of non-bipartite graphs.} It is rather easy to
determine both functions $f(m,\F)$ and $h(\delta,\Delta,\F)$
asymptotically when $\chi(\F)>2$ (there are no bipartite graphs in $\F$), as shown by the following
proposition (a proof is provided in Section~\ref{sec:prop1}).

\begin{proposition}\label{prop:non_bip}
 For every graph $G$ and for every $k\geq 3$ there exists a
 $(k-1)$--partite subgraph $H$ of $G$ such that for every $v\in
 V(G)$,
 $$
 d_{H}(v)\geq  \left(1-\frac{1}{k-1}\right)d_{G}(v)\;.
 $$
Moreover, for every $\F$ with $\chi(\F)=k$, we have
$$
f(m,\F) = \left(1-\frac{1}{k-1}+o_m(1)\right)m\;,
$$
and
$$
h(\delta,\Delta,\F) = \left(1-\frac{1}{k-1}+o_\delta(1)\right)\delta\;.
$$
\end{proposition}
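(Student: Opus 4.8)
\emph{Proof proposal.} The plan is to prove the structural claim (the first displayed inequality) first and then read off both asymptotic identities: the lower bounds for $f$ and $h$ follow immediately from that claim, and matching upper bounds are supplied by two simple extremal constructions together with the Erd\H{o}s--Stone--Simonovits theorem. Throughout, $\chi(\F)=k$ is understood as $\min_{F\in\F}\chi(F)=k$, so in particular there is a fixed $F_0\in\F$ with $\chi(F_0)=k$, and every $(k-1)$-colorable graph is $\F$-free.

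\textbf{The partition lemma.} I would choose a partition $V(G)=V_1\cup\dots\cup V_{k-1}$ maximizing the number of edges with endpoints in distinct classes (equivalently, minimizing the number of edges inside classes), and let $H$ be the spanning subgraph of $G$ formed by those crossing edges; such a partition exists because the crossing-edge count is a bounded integer that can be strictly increased by moving a single vertex whenever the desired inequality fails. Fix $v$, say $v\in V_i$, and let $a_j$ be the number of neighbors of $v$ in $V_j$. Optimality under moving $v$ from $V_i$ to $V_j$ forces $a_i\le a_j$ for every $j$; summing over $j\in\{1,\dots,k-1\}$ gives $(k-1)a_i\le d_G(v)$, hence $d_H(v)=d_G(v)-a_i\ge\bigl(1-\tfrac1{k-1}\bigr)d_G(v)$, which is the first display. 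Since $H$ is $(k-1)$-partite it is $\F$-free.

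\textbf{Lower bounds for $f$ and $h$.} For any $G$ with $m$ edges, the graph $H$ above is $\F$-free and $|E(H)|=\tfrac12\sum_v d_H(v)\ge\bigl(1-\tfrac1{k-1}\bigr)m$, so $f(m,\F)\ge\bigl(1-\tfrac1{k-1}\bigr)m$. For any $G$ with $\delta(G)=\delta$, the same $H$ is a spanning $\F$-free subgraph with $\delta(H)\ge\bigl(1-\tfrac1{k-1}\bigr)\delta$, so $h(\delta,\Delta,\F)\ge\bigl(1-\tfrac1{k-1}\bigr)\delta$. For the upper bounds: for $f$, take $n$ minimal with $\binom n2\ge m$ and let $G$ be any spanning subgraph of $K_n$ with exactly $m$ edges; every $\F$-free subgraph of $G$ is an $F_0$-free subgraph of $K_n$, so by Erd\H{o}s--Stone--Simonovits $\ex(G,\F)\le\ex(K_n,F_0)=\bigl(1-\tfrac1{k-1}+o(1)\bigr)\binom n2=\bigl(1-\tfrac1{k-1}+o_m(1)\bigr)m$, using $\binom n2=m+O(\sqrt m)$. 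For $h$, take $G=K_{\delta+1}\sqcup K_{\Delta+1}$, which has minimum degree $\delta$ and maximum degree $\Delta$; if $H$ is a spanning $\F$-free subgraph of $G$, then $H$ restricted to the $K_{\delta+1}$-component is $F_0$-free, hence has at most $\ex(K_{\delta+1},F_0)=\bigl(1-\tfrac1{k-1}+o(1)\bigr)\binom{\delta+1}2$ edges, so some vertex of that component has degree at most the average $\bigl(1-\tfrac1{k-1}+o_\delta(1)\bigr)\delta$; as such a vertex has all its $G$-neighbors inside the component, $\delta(H)\le\bigl(1-\tfrac1{k-1}+o_\delta(1)\bigr)\delta$, independently of $\Delta$.

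\textbf{Expected main obstacle.} There is essentially none --- this proposition is a warm-up before the genuinely bipartite case. The only points that require care are getting the direction of the single-vertex-move inequality right in the partition lemma, ensuring that the two constructions realize the prescribed parameters exactly (precisely $m$ edges; minimum degree $\delta$ and maximum degree $\Delta$), and invoking Erd\H{o}s--Stone--Simonovits for the single graph $F_0$ (rather than for the whole, possibly infinite, family $\F$) to pin down the constant $1-\tfrac1{k-1}$ on the upper-bound side.
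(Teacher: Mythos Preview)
Your proof is correct and follows essentially the same route as the paper's: the partition argument (maximize crossing edges, then a single-vertex move gives the degree inequality) and the upper-bound construction for $f$ are identical to the paper's. The one place you diverge is the upper-bound example for $h$: you take the disjoint union $K_{\delta+1}\sqcup K_{\Delta+1}$, whereas the paper uses $\Delta$ disjoint copies of $K_{\delta+1}$ together with a new vertex joined to one vertex in each copy. Your construction is simpler and works equally well (connectedness is not required by the definition of $h$), and the concluding average-degree argument via Erd\H{o}s--Stone--Simonovits on the $K_{\delta+1}$ component is the same in both proofs.
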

~\\[-2mm]

\noindent\textbf{Even cycles.} It remains to study $f(m,\F)$ and
$h(\delta,\Delta,\F)$ when $\F$ contains a bipartite graph. In this paper, we focus on
the cases $\F=\F_r=\{C_3,C_4,\dots,C_{2r+1}\}$ and $\F=\Feven =\{C_4,C_6,\dots,C_{2r}\}$ for some $r \geq 2$.
We define
$$
f(m,r)=f(m,\Feven)\;,
$$
and
$$
h(\delta,\Delta,r)=h(\delta,\Delta,\Feven)\;.
$$

Using Proposition \ref{prop:non_bip} one can easily show that every family $\F$ composed of $\Feven$ and other graphs with chromatic number at least $3$ satisfies
\begin{align}\label{eq:equiv1}
f(m,\F)&=\Theta(f(m,r))\\
h(\delta,\Delta,\F)&=\Theta(h(\delta,\Delta,r))\;.\nonumber
\end{align}
This provides us a way to transfer the results obtained for $\Feven$ to $\F_r$.

Denoting by $K_{s,t}$ the complete bipartite graph with parts of size
$s$ and $t$, our first result is:

\begin{theorem}\label{thm:edges}
For every $r\geq 2$ there exists $c=c(r)>0$ such that for
every  large enough $m$,
$$
f(m,r)\geq  \frac{c}{\log{m}} \min_{k\mid m} \text{\emph{$\ex$}}(K_{k,m/k},\text{\emph{$\Feven$}} )\;.
$$
\end{theorem}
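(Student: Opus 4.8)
The plan is to bound $f(m,r)$ from below. Since $f(m,r)=\min\{\ex(G,\Feven):|E(G)|=m\}$, it suffices to show that \emph{every} graph $G$ with $m$ edges contains an $\Feven$-free subgraph with at least $\tfrac{c}{\log m}\min_{k\mid m}\ex(K_{k,m/k},\Feven)$ edges, via a three-step scheme: a dyadic decomposition of $G$ isolating a near-bipartite piece with bounded degree on one side; a \emph{random embedding} of that piece into an extremal $\Feven$-free subgraph of a complete bipartite graph; and a cleaning step that destroys the degenerate short cycles the embedding creates. For the decomposition, call $\lfloor\log_2\min(d_G(u),d_G(v))\rfloor$ the \emph{level} of an edge $uv$; there are only $O(\log m)$ levels, so some level $j$ carries at least $m/(C\log m)$ edges. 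Put $\delta:=2^{j+1}$ and $S_0:=\{v:2^j\le d_G(v)<\delta\}$; each such edge meets $S_0$, so orienting every level-$j$ edge towards an endpoint in $S_0$ and randomly $2$-colouring $S_0$ extracts a bipartite subgraph $B\subseteq G$ with parts $S\subseteq S_0$ and $T$ such that $\deg_B(v)\le\delta$ for all $v\in S$ and $|E(B)|\ge m/(4C\log m)$. The crucial extra fact, from the handshake lemma, is $|S|\le 4m/\delta$, hence $|S|\,\delta\le 4m$.

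Next fix an $\Feven$-free subgraph $H^\ast\subseteq K_{|S|,\delta}$ with $\ex(K_{|S|,\delta},\Feven)$ edges, with sides labelled $S$ and $[\delta]$, and identify the vertices of $S$ with the $S$-side of $H^\ast$ via a bijection that lists both sides in decreasing order of degree (in $B$, resp.\ in $H^\ast$). Choose $\psi\colon T\to[\delta]$ uniformly at random, let $G'$ be the pull-back (keep an edge $vw$ of $B$, $v\in S$, $w\in T$, iff $\psi(w)\in N_{H^\ast}(v)$), and obtain $G''$ from $G'$ by keeping, at each $v\in S$, only one $G'$-neighbour per fibre of $\psi$. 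Let $\phi$ be the identity on $S$ and $\psi$ on $T$. Then $G''\subseteq G$ is $\Feven$-free: as $H^\ast$ is bipartite and $\Feven$-free it has girth $\ge 2r+2$, so a cycle $c_0c_1\cdots c_{2i}=c_0$ of $G''$ with $2\le i\le r$ maps under $\phi$ to a closed walk of length $2i\le 2r$ in $H^\ast$, which --- being shorter than the girth --- must backtrack, forcing $\phi(c_{t-1})=\phi(c_{t+1})$ for some $t$; the vertices $c_{t-1},c_{t+1}$ lie on one side, and since $\phi$ is the identity on $S$ they must lie in $T$, share the $G''$-neighbour $c_t\in S$, and satisfy $\psi(c_{t-1})=\psi(c_{t+1})$ --- contradicting the construction of $G''$.

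It remains to count and compare. Since $|E(G'')|=\sum_{v\in S}\bigl|\psi(N_B(v))\cap N_{H^\ast}(v)\bigr|$, linearity of expectation together with $1-(1-\tfrac1\delta)^{d_B(v)}\ge 1-e^{-d_B(v)/\delta}\ge(1-e^{-1})\,d_B(v)/\delta$ (valid because $d_B(v)\le\delta$) and Chebyshev's sum inequality applied to the two sorted degree sequences give
\[
\E\,|E(G'')|\;\ge\;\frac{1-e^{-1}}{\delta\,|S|}\Bigl(\sum_{v\in S}d_{H^\ast}(v)\Bigr)\Bigl(\sum_{v\in S}d_B(v)\Bigr)\;=\;(1-e^{-1})\,\frac{|E(B)|}{|S|\,\delta}\,\ex(K_{|S|,\delta},\Feven),
\]
so some realisation of $\psi$ yields an $\Feven$-free subgraph of $G$ of at least this size. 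Plugging in $|E(B)|\ge m/(4C\log m)$ and $|S|\,\delta\le 4m$, and using the standard facts that $\ex(K_{a,b},\Feven)$ is monotone in $a$ and $b$ and that $N\mapsto\min_{ab=N}\ex(K_{a,b},\Feven)$ grows polynomially with exponent $<1$, one checks this is $\Omega\bigl(\tfrac1{\log m}\min_{k\mid m}\ex(K_{k,m/k},\Feven)\bigr)$ (the divisibility constraint on $k$ being absorbed by rounding $m/k$ to a nearby integer), which establishes the bound.

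I expect the main obstacle to be the cleaning step: proving that a \emph{single} round of deletions, one per vertex of $S$, already removes every even cycle of length at most $2r$. This rests squarely on the fact that a closed walk shorter than the girth of $H^\ast$ must traverse some edge immediately forwards and backwards, and on treating the small case $2i=4$ (where $c_{t-1},c_{t+1}$ are the opposite corners of the $C_4$) together with the injectivity of $\phi$ on $S$; getting the constant $1-e^{-1}$ and the sorted matching right is exactly what makes the final estimate lose only one logarithmic factor. A secondary, more routine difficulty is the last comparison --- relating $\ex(K_{|S|,\delta},\Feven)$, whose edge count only lies between $m/O(\log m)$ and $4m$, to the ``divisor'' quantity $\min_{k\mid m}\ex(K_{k,m/k},\Feven)$.
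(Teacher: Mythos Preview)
Your argument is essentially the paper's: a dyadic degree decomposition isolates a bipartite piece with bounded degrees on one side, which is then randomly embedded into an extremal $\Feven$-free bipartite graph, with a cleaning step (one neighbour per colour class at each $S$-vertex) guaranteeing the pull-back is $\Feven$-free via the backtracking argument. The paper splits into two cases ($e(V_1,V_2)\ge m/4$ versus $e(V_2)>m/4$, with $V_1$ the high-degree vertices), whereas your single decomposition by $\lfloor\log_2\min(d(u),d(v))\rfloor$ handles both at once; this is a genuine streamlining, and your Chebyshev step neatly replaces the paper's use of near-regularity in $U_q$.

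The one soft spot is the final comparison. You embed into $K_{|S|,\delta}$ with $|S|\,\delta$ possibly as small as $m/O(\log m)$, whereas the paper embeds into $K_{k,m/k}$ with product exactly $m$, making the inequality $\ex(K_{k,m/k},\Feven)\ge\min_{k'\mid m}\ex(K_{k',m/k'},\Feven)$ immediate. Your stated reason (``$N\mapsto\min_{ab=N}\ex(K_{a,b},\Feven)$ grows polynomially with exponent $<1$'') would cost an extra factor $(\log m)^{\alpha}$ and does not close the gap. What does work is the easy subadditivity $\ex(K_{a,2b},\Feven)\le 2\,\ex(K_{a,b},\Feven)$, which makes $b\mapsto \ex(K_{a,b},\Feven)/b$ essentially non-increasing; since $\delta\le 4m/|S|$, this yields
\[
\frac{\ex(K_{|S|,\delta},\Feven)}{|S|\,\delta}\ \ge\ \frac{c}{m}\,\ex\bigl(K_{|S|,\lceil m/|S|\rceil},\Feven\bigr)\ \ge\ \frac{c}{m}\,\min_{k}\ex(K_{k,\lceil m/k\rceil},\Feven),
\]
after which only the routine divisor rounding remains. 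Equivalently, you could take $\psi:T\to[\lceil m/|S|\rceil]$ from the outset (the bound $d_B(v)\le\delta\le 4m/|S|$ still gives $1-(1-|S|/m)^{d_B(v)}=\Omega(d_B(v)\,|S|/m)$), landing directly on $\ex(K_{|S|,\lceil m/|S|\rceil},\Feven)$ as the paper does.
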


Observe that for every $k$ dividing $m$, we have
$$
f(m,r)\leq \ex(K_{k,m/k},\Feven)\;,
$$ since $K_{k,m/k}$ has $m$ edges. Thus, Theorem~\ref{thm:edges} is
asymptotically tight up to a logarithmic factor. This theorem is proved
in Section~\ref{sec3}.

The value of $\ex(K_{k,m/k},\Feven)$ is not known for
general $k$ and $r$. Some results for small values of $r$ can be found in~\cite{cs1991,g1997}.
The results in~\cite{nv2005} imply the following explicit upper bound
$$
f(m,r)=O(m^{(r+1)/2r})\;.
$$

The case $r=2$ of the above problem appears to be more accessible. In particular, combining the upper bound of Kov{\'a}ri, S{\'o}s and Tur{\'a}n~\cite{kst1954} and Reiman~\cite{r1958}, and the lower bound provided by Erd\H os and R\'enyi~\cite{er1962} we have $\ex(K_{n,n},C_4)=\Theta(n^{3/2})$ (see Chapter 6.2 of \cite{bollobas} for a discussion). Here we derive the following corollary (proved in Section~\ref{sec3}):

\begin{corollary}\label{cor:edges_C4}
There exists a constant $c>0$ such that for every  large enough  $m$,
$$
f(m,2)\ge\frac{cm^{2/3}}{\log{m}}\;.
$$
\end{corollary}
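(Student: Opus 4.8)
The plan is to apply Theorem~\ref{thm:edges} with $r=2$, for which $\Feven=\{C_4\}$, so that for large enough $m$
$$
f(m,2)\ \ge\ \frac{c}{\log m}\,\min_{k\mid m}\ex(K_{k,m/k},C_4)\,,
$$
and then to prove that $\ex(K_{k,m/k},C_4)=\Omega(m^{2/3})$ uniformly over all divisors $k$ of $m$. Since $K_{k,m/k}\cong K_{m/k,k}$ one may assume $k\le\sqrt m$; I will write $a=k$ and $b=m/a$, so that $1\le a\le b$ and $ab=m$.

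I would first record two elementary lower bounds on $\ex(K_{a,b},C_4)$ that hold for all $1\le a\le b$. On the one hand, the star $K_{1,b}$ is a $C_4$--free subgraph of $K_{a,b}$, so $\ex(K_{a,b},C_4)\ge b$. On the other hand, I would invoke the fact recalled in the introduction that $\ex(K_{b,b},C_4)=\Theta(b^{3/2})$: fix a $C_4$--free $H_0\subseteq K_{b,b}$ with $e(H_0)\ge c_0 b^{3/2}$, let $B$ be one of its two colour classes, and retain the $a$ vertices of $B$ of largest $H_0$--degree together with the whole opposite class (possible since $a\le b=|B|$). The top $a$ degrees in $B$ sum to at least $\tfrac ab\sum_{v\in B}d_{H_0}(v)=\tfrac ab\,e(H_0)\ge c_0\,a\sqrt b$, and the retained graph is a $C_4$--free subgraph of $K_{a,b}$; hence $\ex(K_{a,b},C_4)\ge c_0\,a\sqrt b$. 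Substituting $a=k$, $b=m/k$ gives, for every $k\le\sqrt m$,
$$
\ex(K_{k,m/k},C_4)\ \ge\ \max\bigl\{\,m/k,\ c_0\sqrt{km}\,\bigr\}\,.
$$
The first term is decreasing and the second increasing in $k$, and they cross at $k=\Theta(m^{1/3})$, where both are $\Theta(m^{2/3})$: if $k\le m^{1/3}$ then $m/k\ge m^{2/3}$, while if $k>m^{1/3}$ then $c_0\sqrt{km}\ge c_0\sqrt{m^{1/3}\cdot m}=c_0 m^{2/3}$. So the displayed maximum is $\Omega(m^{2/3})$ for every divisor $k$, and plugging this back into Theorem~\ref{thm:edges} yields $f(m,2)\ge c'm^{2/3}/\log m$ for large $m$.

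The only genuine difficulty is the uniformity over all factorizations $m=k\cdot(m/k)$. Neither of the two bounds above suffices on its own in the intermediate range $m^{1/3}\ll k\ll\sqrt m$: the star is good only when $k$ is small (so that $m/k$ is large), and a balanced extremal $C_4$--free bipartite graph is good only when $k$ is close to $\sqrt m$. The point is precisely to convert a balanced extremal graph into an unbalanced one by a greedy (highest-degree) restriction, losing only a factor proportional to the aspect ratio, and then to take whichever of the two constructions is better for the given $k$. Everything else is routine arithmetic.
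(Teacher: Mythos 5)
Your argument is correct and is essentially the paper's own: apply Theorem~\ref{thm:edges}, reduce to $k\le\sqrt m$ by symmetry, and bound $\ex(K_{k,m/k},C_4)$ from below by $\max\{m/k,\,\Omega(\sqrt{km})\}$ using the star on one hand and, on the other, a highest-degree restriction of an extremal $C_4$--free subgraph of $K_{m/k,m/k}$. The only cosmetic difference is that you phrase the second bound via the average of the top $a$ degrees rather than the paper's ratio $\tfrac{k^2}{m}\ex(K_{m/k,m/k},C_4)$; these are the same computation.
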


We remark that by applying a standard double counting argument
of~\cite{kst1954}, one can show that
$\ex(K_{m^{1/3},m^{2/3}},C_4)=O(m^{2/3})$. Hence we obtain
$f(m,2)=\tilde{\Theta}(m^{2/3})$ (where the $\tilde{\Theta}$
notation neglects polylogarithmic terms). Then, by~\eqref{eq:equiv1}
$$
f(m,\{C_3,C_4,C_5\})=\tilde{\Theta}(m^{2/3})\;.
$$

By using~\eqref{eq:equiv1} with $\F=\F_r$, Theorem~\ref{thm:edges} provides a lower bound on $f(m,\F_r)$, which implies the existence of large subgraphs with high girth.
We also provide a general lower bound for $h(\delta,\Delta,r)$, proved in Section~\ref{sec:deg_C2r} (where $g(G)$ denotes the girth of graph $G$):

\begin{theorem}\label{thm:deg}
Let $G$ be a graph with minimum degree $\delta$ and (large enough) maximum degree~$\Delta$ such that
$\emph{$\ex$}(K_\Delta,\F_r)\delta\geq \alpha \Delta^2\log^4{\Delta}$ for some large constant $\alpha>0$. Then, for every $r\geq 2$ there exists a spanning subgraph $H$ of $G$ with $g(H)\geq 2r+2$ and $\delta(H)\geq\frac{c\cdot \emph{\ex}(K_\Delta,\F_r) \delta}{\Delta^2 \log{\Delta}}$, for some small constant $c>0$.
In particular, under the above conditions on $\delta$ and $\Delta$,
$$
h(\delta,\Delta,r)\geq h(\delta,\Delta,\F_r) \geq \frac{c\cdot \text{\emph{\ex}}(K_\Delta,\F_r) \delta}{\Delta^2 \log{\Delta}}  \;.
$$
\end{theorem}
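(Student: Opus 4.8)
The plan is to construct the spanning subgraph $H$ via a random embedding of the vertices of $G$ into an extremal $\F_r$-free (equivalently, high-girth) graph on $\Delta$ vertices, and then to keep only those edges of $G$ whose endpoints land on an edge of the extremal graph. More precisely, let $G^*$ be a $\F_r$-free subgraph of $K_\Delta$ with $\text{\ex}(K_\Delta,\F_r)$ edges, so $g(G^*)\geq 2r+2$; choose a uniformly random map $\phi\colon V(G)\to V(G^*)$, and let $H$ be the subgraph of $G$ consisting of all edges $uv\in E(G)$ with $\phi(u)\phi(v)\in E(G^*)$. Since $G^*$ has girth at least $2r+2$ and $H$ maps homomorphically into $G^*$, any cycle of length at most $2r+1$ in $H$ would project to a closed walk of the same length in $G^*$, which (being too short to contain a cycle of $G^*$) must backtrack; a short argument rules this out, or alternatively one first prunes $G^*$ so that it is also triangle/short-odd-cycle-free, guaranteeing $g(H)\geq 2r+2$ deterministically. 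So the girth condition is essentially free once the embedding target is fixed; the whole difficulty is the degree bound.

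For the degree bound, fix a vertex $v\in V(G)$ with $d_G(v)=d\geq \delta$. Conditioning on $\phi(v)=x$, the degree $d_H(v)$ is the number of neighbours $u$ of $v$ in $G$ with $\phi(u)\in N_{G^*}(x)$. Each such event has probability $d_{G^*}(x)/\Delta$, and in expectation (averaging also over $x$, using that $G^*$ has average degree $2\,\text{\ex}(K_\Delta,\F_r)/\Delta$) we get roughly $\mathbb{E}[d_H(v)]\approx \frac{2\,\text{\ex}(K_\Delta,\F_r)}{\Delta^2}\,d_G(v)\geq \frac{2\,\text{\ex}(K_\Delta,\F_r)\,\delta}{\Delta^2}$. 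The target bound in the theorem is this quantity divided by $\log\Delta$, so there is a $\log\Delta$ slack, which is exactly what one needs to absorb two sources of loss: first, many vertices $x$ of $G^*$ may have degree far below the average (the extremal graph need not be regular), so one should restrict $\phi(v)$ to the set $S$ of vertices of $G^*$ of degree at least, say, $\text{\ex}(K_\Delta,\F_r)/(\Delta\log\Delta)$ — a counting argument shows $|S|$ is a constant fraction of $\Delta$ provided $\text{\ex}(K_\Delta,\F_r)=\Omega(\Delta\log\Delta)$ — and second, concentration. The neighbours $u$ of $v$ are embedded independently, so $d_H(v)$ conditioned on $\phi(v)=x\in S$ is a sum of independent indicator variables with mean $\mu_x:=d_G(v)\,d_{G^*}(x)/\Delta\geq \delta\,\text{\ex}(K_\Delta,\F_r)/(\Delta^2\log\Delta)$; by the hypothesis $\text{\ex}(K_\Delta,\F_r)\,\delta\geq\alpha\Delta^2\log^4\Delta$ this mean is at least $\alpha\log^3\Delta$, comfortably large, so a Chernoff bound gives $d_H(v)\geq \mu_x/2$ with failure probability at most $\exp(-\Omega(\log^3\Delta))=\Delta^{-\omega(1)}$, which beats a union bound over the $n\leq$ (anything polynomial) vertices of $G$ — and in fact $n$ can be arbitrary here, but since we only need existence, we can first reduce to the case $n=\text{poly}(\Delta)$ by working inside a bounded-degree subgraph, or simply observe the union bound works because $\exp(-c\log^3\Delta)$ can be made smaller than $1/n$ by noting $n$ itself is bounded in terms of $\Delta$ after passing to a suitable piece; the cleanest route is to note that in $G$ we may assume $\Delta$ is fixed and the failure probability per vertex is super-polynomially small in $\Delta$, handled by a Lovász Local Lemma argument since the event for $v$ depends only on $\phi(v)$ and $\phi(u)$ for $u\in N_G(v)$, hence is independent of all but at most $\Delta^2$ other such events.

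Putting this together: with positive probability, every $v\in V(G)$ with $\phi(v)\in S$ satisfies $d_H(v)\geq \tfrac12\mu_{\phi(v)}\geq \tfrac{c\,\text{\ex}(K_\Delta,\F_r)\,\delta}{\Delta^2\log\Delta}$. To get a genuinely spanning subgraph with this minimum degree over all vertices, I would run the Local Lemma with the conditioning that $\phi(v)\in S$ for every $v$ simultaneously — i.e. choose $\phi$ uniformly at random from maps into $S$ rather than into all of $V(G^*)$. Since $|S|\geq \beta\Delta$ for a constant $\beta>0$ and the restriction of $G^*$ to $S$ still has $\Omega(\text{\ex}(K_\Delta,\F_r))$ edges incident to $S$ on the high-degree side (each $x\in S$ keeps at least $\text{\ex}(K_\Delta,\F_r)/(\Delta\log\Delta) - |V(G^*)\setminus S|$... — here one has to be slightly careful, so instead I would only require that the $G^*$-neighbours of $x$ that survive, i.e. lie in the embedded image, are counted, which is automatic because $\phi$ maps into $S$ and we count $u$ with $\phi(u)\in N_{G^*}(\phi(v))\cap S$; redoing the expectation with $N_{G^*}(x)\cap S$ in place of $N_{G^*}(x)$ costs at most another constant factor, since a high-degree vertex $x$ has at least half its neighbourhood in $S$ by another counting step, or we simply absorb this into the constant $c$). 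The main obstacle, as signalled, is precisely this interplay between the irregularity of the extremal graph $G^*$ and the need for concentration: one must choose the degree threshold defining $S$ carefully so that (a) $S$ is linear in $\Delta$, (b) the induced subgraph $G^*[S]$ still carries a positive fraction of the edges, and (c) the per-vertex mean $\mu_x$ stays $\omega(\log^2\Delta)$ so Chernoff/LLL succeeds — and the hypothesis $\text{\ex}(K_\Delta,\F_r)\,\delta\geq\alpha\Delta^2\log^4\Delta$ is calibrated to make all three hold with the threshold $\text{\ex}(K_\Delta,\F_r)/(\Delta\log\Delta)$.
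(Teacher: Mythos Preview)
Your girth argument has a genuine gap. You claim that since $G^*$ has girth at least $2r+2$, any short cycle in $H$ projects to a closed walk in $G^*$ that must backtrack, and that ``a short argument rules this out.'' It does not. Take a $4$-cycle $v,u_1,w,u_2$ in $G$ with $\phi(u_1)=\phi(u_2)=b$, $\phi(v)=a$, $\phi(w)=c$, and $ab,bc\in E(G^*)$: all four edges survive in $H$, so $H$ contains a $C_4$, while the image in $G^*$ is the backtracking walk $a,b,c,b,a$. Nothing about the girth of $G^*$ prevents this, and your ``alternatively'' does not help either, since an $\F_r$-free $G^*$ already has no short odd cycles. With $\phi$ mapping up to $\Delta$ neighbours of a vertex into a $\Delta$-vertex target, such colour collisions are not rare; they are typical, and the resulting $H$ will in general be full of short even cycles. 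So the sentence ``the girth condition is essentially free once the embedding target is fixed; the whole difficulty is the degree bound'' is exactly backwards.

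This is why the paper's proof is structured as it is. It embeds into a graph $\G$ on about $2e^4\Delta$ vertices (not $\Delta$), uses the Local Lemma to secure simultaneously the degree lower bound on $H'=H'_{(\chi,\G)}$ \emph{and} the fact that $\chi$ is $(\log\Delta)$-frugal on $H'$, and then applies a separate sparsification lemma (Lemma~\ref{lem:better_frugal}): given a $t$-frugal colouring with no short rainbow cycles or short maximal inner-rainbow paths, one can pass to a subgraph of girth at least $2r+2$ at the cost of dividing every degree by $4t$. The factor $\log\Delta$ in the theorem comes from this frugality-to-girth step with $t=\log\Delta$, not from irregularity of the extremal graph or slack in a Chernoff bound. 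The irregularity issue you spend most of your sketch on is handled without any logarithmic loss: Observation~\ref{obs:graph} replaces the extremal graph by a subgraph with minimum degree $\Omega(\ex(K_\Delta,\F_r)/\Delta)$, obtained by iteratively deleting low-degree vertices, so there is no need for your set $S$ or the conditioning gymnastics around it.
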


The existence of a graph with a given number of vertices, many edges and large girth is one of the
most interesting open problems in extremal graph theory (see Section~$4$ in~\cite{fs2013}).
The best known result that holds for any value of $r\geq 2$ was given by Lazebnik, Ustimenko and Woldar in~\cite{luw1995}. They showed that $\ex(K_n,\F_r)=\Omega(n^{1+\frac{2}{3r-2}}$). Using that, we can obtain the following explicit corollary of Theorem~\ref{thm:deg}.
\begin{corollary}\label{cor:deg_C2r}
For every $\Delta$ and $\delta$ such that $\ex(K_\Delta,\F_r)\delta\geq \alpha \Delta^2\log^4{\Delta}$ for some large constant $\alpha>0$, we have
$$
h(\delta,\Delta,r)\geq h(\delta,\Delta,\F_r)= \Omega\left(\frac{\delta}{\Delta^{1-\frac{2}{3r-2}} \log{\Delta}}  \right)\;.
$$
\end{corollary}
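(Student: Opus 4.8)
\textbf{Proof proposal for Corollary~\ref{cor:deg_C2r}.}
The plan is to treat this as a direct substitution into Theorem~\ref{thm:deg}, the only external ingredient being the explicit lower bound on the Tur\'an number $\ex(K_n,\F_r)$. Recall that Lazebnik, Ustimenko and Woldar~\cite{luw1995} constructed, for every fixed $r\geq 2$, graphs of girth at least $2r+2$ with $\Omega(n^{1+\frac{2}{3r-2}})$ edges on $n$ vertices; equivalently $\ex(K_n,\F_r)=\Omega(n^{1+\frac{2}{3r-2}})$, where the implied constant may depend on $r$ but this is harmless since $r$ is fixed throughout.

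First I would verify that the hypotheses of Theorem~\ref{thm:deg} are satisfied. The corollary assumes precisely that $\ex(K_\Delta,\F_r)\delta\geq\alpha\Delta^2\log^4\Delta$ for a sufficiently large constant $\alpha$, and $\Delta$ is taken large enough (implicitly) for the theorem to be applicable. Hence Theorem~\ref{thm:deg} yields a spanning subgraph $H\subseteq G$ with $g(H)\geq 2r+2$ and, more to the point here, the bound
$$
h(\delta,\Delta,r)\geq h(\delta,\Delta,\F_r)\geq\frac{c\cdot\ex(K_\Delta,\F_r)\,\delta}{\Delta^2\log\Delta}
$$
for some constant $c>0$, where the first inequality holds because $\Feven\subseteq\F_r$, so every $\F_r$--free subgraph is $\Feven$--free.

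Next I would plug the Lazebnik--Ustimenko--Woldar estimate $\ex(K_\Delta,\F_r)=\Omega\!\left(\Delta^{1+\frac{2}{3r-2}}\right)$ into the displayed bound, obtaining
$$
h(\delta,\Delta,\F_r)\geq\frac{c\cdot\ex(K_\Delta,\F_r)\,\delta}{\Delta^2\log\Delta}=\Omega\!\left(\frac{\delta\,\Delta^{1+\frac{2}{3r-2}}}{\Delta^2\log\Delta}\right)=\Omega\!\left(\frac{\delta}{\Delta^{1-\frac{2}{3r-2}}\log\Delta}\right),
$$
which is exactly the claimed inequality, and combined with $h(\delta,\Delta,r)\geq h(\delta,\Delta,\F_r)$ it gives the full statement.

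There is essentially no obstacle here beyond Theorem~\ref{thm:deg} itself: the only point requiring a moment's care is checking that the hypothesis $\ex(K_\Delta,\F_r)\delta\geq\alpha\Delta^2\log^4\Delta$ of the corollary matches the hypothesis of Theorem~\ref{thm:deg} verbatim (it does), and that the $r$-dependent constant hidden in the $\Omega$ of the extremal bound is absorbed into the final $\Omega$. All the real work has already been carried out in the proof of Theorem~\ref{thm:deg}.
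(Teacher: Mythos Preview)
Your proposal is correct and matches the paper's approach exactly: the paper states the corollary immediately after quoting the Lazebnik--Ustimenko--Woldar bound $\ex(K_n,\F_r)=\Omega(n^{1+\frac{2}{3r-2}})$ and derives it by direct substitution into Theorem~\ref{thm:deg}, just as you do. There is no separate argument in the paper beyond this substitution.
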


The upper bound for the Tur\'an number of even cycles
$\ex(K_d,C_{2r})=O(d^{1+1/r})$,  shown by Bondy and
Simonovits~\cite{bs1974}, implies that
\begin{align}\label{eq:UBexC2r}
h(d,d,r) \leq \frac{2}{d}\ex(K_{d+1},C_{2r}) = O(d^{\frac{1}{r}})\;.
\end{align}
Since the above cited upper bound for $\ex(K_{d+1},C_{2r})$ is
conjectured to be of the right order, Corollary~\ref{cor:deg_C2r} is
probably not tight.

For the particular case $r=2$ we can derive a better bound.  Erd\H os, R\'enyi and S\'os~\cite{ers1966} and Brown~\cite{b1966} showed that for every prime $p$ there exists a $C_4$--free $p$-regular graph with $p^2-1$ vertices. By the density of primes, for every $n$ there exists a graph of order at most $2n$ satisfying the former properties. Thus, we can obtain the following corollary of Theorem~\ref{thm:deg}:

\begin{corollary}\label{cor:deg_C4}
For every $\Delta$ and $\delta$ such that $\delta\geq \alpha \sqrt{\Delta}\log^4{\Delta}$ for some large constant $\alpha>0$, we have
$$ h(\delta,\Delta,2)\geq h(\delta,\Delta,\{C_3,C_4,C_5\}) = \Omega\left(\frac{\delta}{\sqrt{\Delta} \log{\Delta}}\right)\;.
$$
\end{corollary}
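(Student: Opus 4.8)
The plan is to obtain Corollary~\ref{cor:deg_C4} as the special case $r=2$ of Theorem~\ref{thm:deg}, the only genuine input being a lower bound on $\ex(K_\Delta,\F_2)$. Here $\F_2=\{C_3,C_4,C_5\}$, so an $\F_2$-free graph is precisely a graph of girth at least $6$. First I would dispose of the left-hand inequality $h(\delta,\Delta,2)\ge h(\delta,\Delta,\{C_3,C_4,C_5\})$: for $r=2$ one has $h(\delta,\Delta,2)=h(\delta,\Delta,\{C_4\})$ by definition, and every $\{C_3,C_4,C_5\}$-free spanning subgraph of any graph $G$ is in particular $C_4$-free, so $d(G,\{C_4\})\ge d(G,\{C_3,C_4,C_5\})$; taking the minimum over all $G$ with $\Delta(G)=\Delta$ and $\delta(G)=\delta$ yields the claim. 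It remains to prove $h(\delta,\Delta,\{C_3,C_4,C_5\})=\Omega\!\left(\delta/(\sqrt{\Delta}\log\Delta)\right)$.

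The second step is to show $\ex(K_\Delta,\{C_3,C_4,C_5\})\ge c_0\,\Delta^{3/2}$ for an absolute constant $c_0>0$ and all large $\Delta$. For a prime (or prime power) $q$ the incidence graph of the projective plane $PG(2,q)$ is a $(q+1)$-regular bipartite graph of girth $6$ on $2(q^2+q+1)$ vertices; equivalently, one may take the $q$-regular $C_4$-free graph of Erd\H os--R\'enyi--S\'os and of Brown on $q^2-1$ vertices and extract (via Proposition~\ref{prop:non_bip} with $k=3$, or the trivial max-cut bound) a bipartite subgraph keeping at least half of its edges, which is then bipartite and $C_4$-free, hence of girth at least $6$. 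To make the bound uniform in $\Delta$, I would use the density of primes to pick $q=\Theta(\sqrt{\Delta})$ with (say) $q^2-1\le\Delta$, and pad the resulting girth-$6$ graph with isolated vertices up to $\Delta$ vertices; this exhibits an $\F_2$-free subgraph of $K_\Delta$ with $\Omega(\Delta^{3/2})$ edges.

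With this in hand, the arithmetic is routine. The hypothesis $\delta\ge\alpha\sqrt{\Delta}\log^4\Delta$ of the corollary implies the hypothesis $\ex(K_\Delta,\F_2)\,\delta\ge\alpha_0\Delta^2\log^4\Delta$ of Theorem~\ref{thm:deg} (with its own constant $\alpha_0$) as soon as $\alpha$ is chosen large enough in terms of $\alpha_0$ and $c_0$, because then $\ex(K_\Delta,\F_2)\,\delta\ge c_0\Delta^{3/2}\cdot\alpha\sqrt{\Delta}\log^4\Delta\ge\alpha_0\Delta^2\log^4\Delta$. Theorem~\ref{thm:deg} applied with $r=2$ now gives
$$
h(\delta,\Delta,2)\ \ge\ h(\delta,\Delta,\F_2)\ \ge\ \frac{c\cdot\ex(K_\Delta,\F_2)\,\delta}{\Delta^2\log\Delta}\ \ge\ \frac{c\,c_0\,\delta}{\sqrt{\Delta}\log\Delta}\ =\ \Omega\!\left(\frac{\delta}{\sqrt{\Delta}\log\Delta}\right),
$$
which is exactly the statement.

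There is no real obstacle here: once Theorem~\ref{thm:deg} is available, the corollary is a substitution. The only points that deserve attention are (i) that the classical $C_4$-free extremal graphs have triangles, so one must either use a genuinely girth-$6$ construction or pass to a bipartite subgraph before invoking $\F_2$-freeness, and (ii) that the lower bound $\ex(K_\Delta,\{C_3,C_4,C_5\})=\Omega(\Delta^{3/2})$ must hold for every (large) $\Delta$, which forces the small detour through the density of primes and padding by isolated vertices.
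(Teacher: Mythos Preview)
Your proposal is correct and follows essentially the same route as the paper: derive the corollary from Theorem~\ref{thm:deg} with $r=2$ by plugging in the classical lower bound $\ex(K_\Delta,\F_2)=\Omega(\Delta^{3/2})$, obtained from the Erd\H os--R\'enyi--S\'os/Brown construction together with the density of primes. You are in fact slightly more careful than the paper's one-line justification, since you explicitly address the point that the $C_4$-free polarity graphs may contain triangles and that one must therefore either use the girth-$6$ incidence graph or pass to a bipartite subgraph before invoking $\F_2$-freeness.
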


This corollary is tight up to a logarithmic factor, as will be shown in
Proposition~\ref{prop:deg_C4_tight} (Section~\ref{sec:deg_C2r}). The
condition on $\delta$ and $\Delta$ is also tight up to a logarithmic factor since, if $\Delta\geq\delta^2$, any spanning $C_4$--free subgraph $H$ of $K_{\delta,\Delta}$ satisfies $\delta(H)\le 1$.
Similar results can be derived for $r=3,5$, since there exist graphs with girth at least $8$ and $12$ respectively and large minimum degree~\cite{luw1999,lv2005}.\\[-2mm]

\noindent\textbf{Outline.} We begin with some preliminary
considerations in Section~\ref{sec:tools}. We then prove
Theorem~\ref{thm:edges} and Corollary~\ref{cor:edges_C4} in
Section~\ref{sec3}. Section~\ref{sec:deg_C2r} is devoted to the proof
of Theorem~\ref{thm:deg} and Proposition~\ref{prop:deg_C4_tight}. We
conclude with some remarks and open questions in Section~\ref{sec:conc}.

\section{Preliminaries}\label{sec:tools}

For every vertex $v\in V$ let $N_G(v)$ denote the set of neighbors of $v$ in $G$ and $d_G(v)=|N_G(v)|$. If the graph $G$ is clear from the context we will denote the above quantities by $N(v)$ and $d(v)$, respectively. We use $\log(x)$ to denote the natural logarithm of $x$.

\subsection{Proof of Proposition~\ref{prop:non_bip}}\label{sec:prop1}

We start by proving Proposition~\ref{prop:non_bip}. We will use the
well-known Erd\H os-Stone-Simonovits theorem~\cite{es1946,es1966}: for every graph $F$
with $\chi(F)=k$,
\begin{align}\label{eq:ES}
\ex(K_n,F) = \left(1-\frac{1}{k-1} +o(1)\right)\binom{n}{2}\;.
\end{align}
\begin{proofof}[Proof of Proposition~\ref{prop:non_bip}]
 Let $G$ be a graph with $m$ edges  and consider a $(k-1)$--partition $\P=\{P_1,P_2, \dots ,P_{k-1}\}$  of $V(G)$ that maximizes the number of edges in the subgraph $G_{\P}=G\setminus (G[P_1]\cup \dots
\cup G[P_{k-1}])$. Then, we claim that for every $v\in V(G)$, $d_{G_{\P}}(v)\geq \left(1-\frac{1}{k-1}\right)d_G(v)$.

For the sake of contradiction, suppose that there is a vertex $v\in P_i$, $i\in [k-1]$, with degree in $G_{\P}$ less than
$\left(1-\frac{1}{k-1}\right)d_G(v)$. Then, there are more than $\frac{d_G(v)}{k-1}$ neighbors of $v$
in $P_i$ and there is a part $P_j$, $j\neq i$, with at
most $\frac{d_G(v)}{k-1}$ neighbors of $v$. Moving $v$ from $P_i$ to $P_j$
increases the number of edges in $G_{\P}$ by at least one, which gives a contradiction by the choice of $\P$.

Clearly, $G_{\P}$ does not contain any copy of $F$ with $\chi(F)\geq k$, and thus it does not contain any graph in $\F$.
Observe also that $G_{\P}$ has at least $\left(1-\frac{1}{k-1}\right)m$ edges and minimum degree $\delta(G_{\P}) \geq \left(1-\frac{1}{k-1}\right)\delta(G)$.

 Notice that $f(m,\F)\leq f(m,F)$ and $h(\delta,\Delta,\F)\leq h(\delta,\Delta,F)$ for every $F\in \F$. Let $F\in \F$ be such that $\chi(F)=k$. The upper bound for $f(m,F)$ follows from (\ref{eq:ES}) by choosing $n$ for which $\binom{n-1}{2}<m\le\binom{n}{2}$, and then by taking $G$ to be any subgraph of $K_n$ with exactly $m$ edges. For the upper bound on $h(\delta,\Delta,F)$ (assuming $\delta\ge 2$) take $\Delta$ disjoint copies of $K_{\delta+1}$, add a new vertex $v$ and connect it to one vertex from each of the cliques $K_{\delta+1}$. If a subgraph $H$ of the so obtained graph $G$ is $F$-free, then the subgraph of $H$ spanned by the vertex set of each of the cliques $K_{\delta+1}$ is $F$-free as well, thus implying by (\ref{eq:ES}) that $\delta(H)\le  \left(1-\frac{1}{k-1} +o(1)\right)\delta$.
\end{proofof}

\subsection{Useful definitions}

The following definitions will be useful in our proofs.

\begin{definition}\label{def:Gprime}
For every graph $G$, every $\G$ with $V(\G)=[\ell]$ and every vertex labeling
$\chi: V(G)\to [\ell]$ we define the spanning subgraph
$H'_{(\chi,\G)}\subseteq G$ as the subgraph with vertex set $V(G)$
where an edge $e=uv$ is present if and only if $uv\in E(G) \text{ and
} \chi(u)\chi(v)\in E(\G)$.
\end{definition}

\begin{definition}\label{def:Gstar}
For every graph $G$, every $\G$ with $V(\G)=[\ell]$ and every vertex labeling
$\chi: V(G)\to [\ell]$ we define the spanning subgraph
$H^*_{(\chi,\G)}\subseteq G$ as the subgraph with vertex set $V(G)$ such that an edge $e=uv$ is present in $H^*$ if all the following
properties are satisfied: \renewcommand{\labelenumi}{\roman{enumi})}
\begin{enumerate}
\item $uv\in E(G) \text{ and } \chi(u)\chi(v)\in E(\G)$, that is $e\in E(H')$,
\item for every $w\neq v$, $w\in N_G(u)$, we have $\chi(w)\neq \chi(v)$, and
\item for every $w\neq u$, $w\in N_G(v)$, we have $\chi(w)\neq \chi(u)$.
\end{enumerate}
\renewcommand{\labelenumi}{\arabic{enumi})}
\end{definition}

These two definitions are crucial for our proofs. We will use them to randomly embed a (large) graph $G$ into a fixed extremal graph $\G$ satisfying certain conditions, therefore creating a subgraph of $G$. The way in which we construct the subgraph will allow us to prove that it preserves some of the properties of $\G$.

The concept of frugal coloring was introduced by Hind, Molloy and Reed in~\cite{hmr1997}. We say that a proper coloring $\chi: V(G)\to [\ell]$ is \emph{$t$-frugal} if  for every vertex $v$ and every color $c\in [\ell]$,
$$
|N_G(v)\cap \chi^{-1}(c)|\leq t\;,
$$
that is, there are at most $t$ vertices of the same color in the neighborhood of each vertex.
For instance, a $1$--frugal coloring of $G$ is equivalent to a proper coloring of $G^2$.

\subsection{Probabilistic tools}

Here we state some (standard) lemmas we will use in the proofs.

\begin{lemma}[Chernoff inequality for binomial
distributions~\cite{as2008}]\label{lem:chernoff}
	Let $X\sim \Bin(N,p)$ be a Binomial random variable. Then for all $0<\eps<1$,
\begin{enumerate}
 \item  $\Pr(X\leq (1-\eps)Np)< \exp\left(-\frac{\eps^2}{2}Np\right).$
\item   $\Pr(X\geq (1+\eps)Np)< \exp\left(-\frac{\eps^2}{3}Np\right).$
\end{enumerate}
\end{lemma}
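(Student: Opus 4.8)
The plan is to use the standard exponential moment (Laplace transform) method. Write $X=\sum_{i=1}^{N}X_i$ as a sum of i.i.d.\ indicator variables with $\Pr(X_i=1)=p$, and record the one-variable moment generating function estimate $\E\!\left[e^{\lambda X_i}\right]=1-p+pe^{\lambda}=1+p(e^{\lambda}-1)\le \exp\left(p(e^{\lambda}-1)\right)$, valid for every real $\lambda$ by the inequality $1+x\le e^{x}$; by independence this gives $\E\!\left[e^{\lambda X}\right]\le \exp\left(Np(e^{\lambda}-1)\right)$. The two tail bounds then follow by applying Markov's inequality to $e^{\lambda X}$ with $\lambda>0$ for the upper tail and to $e^{-\lambda X}$ with $\lambda>0$ for the lower tail, and optimizing over $\lambda$.

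For part~(2), Markov's inequality yields, for every $\lambda>0$,
$$
\Pr\left(X\ge (1+\eps)Np\right)\le e^{-\lambda(1+\eps)Np}\,\E\!\left[e^{\lambda X}\right]\le \exp\left(Np\left(e^{\lambda}-1-\lambda(1+\eps)\right)\right).
$$
Choosing the minimizer $\lambda=\log(1+\eps)>0$ collapses the exponent to $-Np\left((1+\eps)\log(1+\eps)-\eps\right)$, so it suffices to prove the elementary scalar inequality $(1+\eps)\log(1+\eps)-\eps\ge \eps^{2}/3$ for $0<\eps<1$. Letting $\phi(\eps)$ denote the difference of the two sides, one checks $\phi(0)=0$, $\phi'(\eps)=\log(1+\eps)-\tfrac{2\eps}{3}$ so $\phi'(0)=0$, and $\phi''(\eps)=\tfrac1{1+\eps}-\tfrac23$ changes sign only once on $(0,1)$ while $\phi'(1)=\log 2-\tfrac23>0$; hence $\phi'\ge 0$ and $\phi\ge 0$ on $[0,1)$, giving the claim.

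For part~(1), the same computation with $e^{-\lambda X}$, $\lambda>0$, gives
$$
\Pr\left(X\le (1-\eps)Np\right)\le \exp\left(Np\left(e^{-\lambda}-1+\lambda(1-\eps)\right)\right),
$$
and the optimal choice $\lambda=-\log(1-\eps)>0$ reduces the task to $\eps+(1-\eps)\log(1-\eps)\ge \eps^{2}/2$ for $0\le\eps<1$. Here the auxiliary function $\psi(\eps)$ equal to the difference satisfies $\psi(0)=0$, $\psi'(\eps)=-\log(1-\eps)-\eps$ so $\psi'(0)=0$, and $\psi''(\eps)=\eps/(1-\eps)\ge 0$; thus $\psi$ is convex with minimum at $0$, so $\psi\ge 0$. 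There is no genuine obstacle in this proof: the only content is the two one-variable calculus estimates above, and the only points requiring a little care are the sign of $\lambda$ in the lower-tail application of Markov's inequality and the degenerate cases (e.g.\ $p=0$, or $Np$ very small), where the asserted bounds hold trivially since the right-hand sides are at most $1$ while the probabilities may vanish.
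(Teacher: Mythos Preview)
Your argument is the standard Cram\'er--Chernoff exponential moment proof and is correct; the calculus verifications for the two scalar inequalities are accurate, and in fact yield strict inequalities for $0<\eps<1$ and $Np>0$, so the strict form stated in the lemma follows (the edge case $p=0$ makes the strict inequality fail for trivial reasons, but that is a defect of the statement, not of your proof). The paper itself does not prove this lemma at all: it is quoted verbatim from \cite{as2008} as a black box, so there is no ``paper's proof'' to compare against --- your write-up simply supplies the omitted standard argument.
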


Let $L:S^T\to \mathbb{R}$ be a functional. We say that $L$ satisfies the Lipschitz condition if for every $g$ and $g'$ differing in just one coordinate from the product space $S^T$, we have
$$
|L(g)-L(g')|\leq 1\;.
$$

\begin{lemma}[Azuma inequality, Theorem $7.4.2$ in \cite{as2008}]\label{lem:azuma}
Let $L$ satisfy the Lipschitz condition relative to a gradation of length $l$ (i.e. $|T|=l$). Then for all $\lambda>0$
\begin{enumerate}
 \item  $\Pr(L\leq \mathbb{E}(L)-\lambda \sqrt{l})< e^{-\frac{\lambda^2}{2}},$
 \item  $\Pr(L\geq \mathbb{E}(L)+\lambda \sqrt{l})< e^{-\frac{\lambda^2}{2}}.$
\end{enumerate}
\end{lemma}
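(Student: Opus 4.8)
The plan is to run the standard Doob martingale argument. Write $T=\{1,\dots,l\}$, let $g=(g_1,\dots,g_l)$ range over the product space $S^T$, and take the gradation $\mathcal{F}_i=\sigma(g_1,\dots,g_i)$ (so $\mathcal{F}_0$ is trivial and $\mathcal{F}_l$ carries all the randomness). Set $X_i=\E[L\mid\mathcal{F}_i]$, so that $X_0=\E[L]$, $X_l=L$, and $(X_i)_{i=0}^l$ is a martingale with increments $Y_i=X_i-X_{i-1}$ satisfying $\E[Y_i\mid\mathcal{F}_{i-1}]=0$ and $L-\E[L]=\sum_{i=1}^l Y_i$. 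The whole statement thus reduces to a tail estimate for a sum of bounded martingale differences.

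First I would check that $|Y_i|\le 1$. Fixing $g_1,\dots,g_{i-1}$, the Lipschitz condition says that $L$ changes by at most $1$ when only coordinate $i$ is changed; averaging out coordinates $i+1,\dots,l$ can only shrink this, so the function $x\mapsto\E_{g_{i+1},\dots,g_l}[L(g_1,\dots,g_{i-1},x,g_{i+1},\dots,g_l)]$ has range contained in an interval of length $1$. Since $Y_i$ equals this function evaluated at $g_i$ minus its mean, $Y_i$ lies in an interval of length $1$ that contains $0$, hence $|Y_i|\le 1$.

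Next I would bound the conditional exponential moment of each step. For $\alpha\ge 0$, convexity of $t\mapsto e^{\alpha t}$ on $[-1,1]$ gives $e^{\alpha y}\le\cosh\alpha+y\sinh\alpha$ for $y\in[-1,1]$; applying this with $y=Y_i$ and using $\E[Y_i\mid\mathcal{F}_{i-1}]=0$ yields $\E[e^{\alpha Y_i}\mid\mathcal{F}_{i-1}]\le\cosh\alpha\le e^{\alpha^2/2}$. Peeling the factors off one at a time with the tower property gives $\E\big[e^{\alpha(L-\E[L])}\big]\le e^{l\alpha^2/2}$.

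Finally, Markov's inequality gives $\Pr(L\ge\E[L]+t)\le e^{-\alpha t}\,\E\big[e^{\alpha(L-\E[L])}\big]\le e^{-\alpha t+l\alpha^2/2}$ for every $\alpha\ge 0$; optimizing with $\alpha=t/l$ gives $\Pr(L\ge\E[L]+t)\le e^{-t^2/(2l)}$, and the substitution $t=\lambda\sqrt l$ proves part~(2). Part~(1) follows by applying part~(2) to $-L$, which is again $1$-Lipschitz relative to the same gradation. The only mildly delicate point is the Lipschitz-to-bounded-increment step (controlling $|Y_i|$ after the later coordinates are averaged out); everything else is the routine optimization of an exponential moment.
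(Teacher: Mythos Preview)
Your argument is correct and is precisely the standard Doob-martingale proof of Azuma's inequality (the one given in the cited reference \cite{as2008}). The paper itself does not supply a proof of this lemma; it is quoted as a black-box tool, so there is nothing to compare against beyond noting that your write-up matches the textbook derivation.
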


\begin{lemma}[Weighted Lov\'asz Local Lemma~\cite{mr2002}]\label{lem:WLLL}
Let $\A=\{A_1 ,\dots, A_N\}$ be a set of events and let $H$ be a dependency graph for $\A$.

If there exist   weights $w_1,\ldots,w_N\ge 1$ and a real
$p\leq\tfrac{1}{4}$ such that for each $ i\in [N]$:
\begin{enumerate}
\item $\Pr(A_i)\le p^{w_i}$, and
\item $\sum_{j:\,ij\in E(H)}(2p)^{w_j}\le\frac{w_i}{2}$\;,
\end{enumerate}
then
\begin{align*}%\label{eq:bound_WLLL}
\Pr \left(\bigcap_{i=1}^N \overline{A_i}\right)>0\,.
\end{align*}
\end{lemma}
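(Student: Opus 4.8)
The plan is to run the standard induction-on-the-conditioning-set proof of the Lov\'asz Local Lemma (this is the weighted variant of Molloy and Reed), carrying the weights throughout. The core claim is that for every $i\in[N]$ and every $S\subseteq[N]\setminus\{i\}$,
$$
\Pr\Bigl(A_i \Bigm| \bigcap_{j\in S}\overline{A_j}\Bigr)\le (2p)^{w_i}\,,
$$
and I would prove this by induction on $|S|$. Two observations make the bookkeeping transparent: first, the target $(2p)^{w_i}$ is deliberately weaker than the hypothesis $\Pr(A_i)\le p^{w_i}$, and the slack factor $2^{w_i}$ is exactly what lets the induction absorb the conditioning; second, since $w_i\ge 1$ and $p\le\tfrac14$, we have $(2p)^{w_i}\le 2p\le\tfrac12$, so every quantity appearing below lies in a range where elementary estimates apply.

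The base case $S=\emptyset$ is immediate from $\Pr(A_i)\le p^{w_i}\le(2p)^{w_i}$. For the inductive step, split $S=S_1\cup S_2$, where $S_1=\{j\in S:\,ij\in E(H)\}$ are the $H$-neighbours of $i$ lying in $S$ and $S_2=S\setminus S_1$. If $S_1=\emptyset$ then $A_i$ is independent of $\{A_j:j\in S\}$ and the conditional probability is just $\Pr(A_i)\le(2p)^{w_i}$. Otherwise, writing $B=\bigcap_{k\in S_2}\overline{A_k}$, expanding the conditional probability gives
$$
\Pr\Bigl(A_i \Bigm| \bigcap_{j\in S}\overline{A_j}\Bigr)=\frac{\Pr\bigl(A_i\cap\bigcap_{j\in S_1}\overline{A_j}\bigm| B\bigr)}{\Pr\bigl(\bigcap_{j\in S_1}\overline{A_j}\bigm| B\bigr)}\,.
$$
I would bound the numerator from above by discarding $\bigcap_{j\in S_1}\overline{A_j}$ and using independence of $A_i$ from $\{A_k:k\in S_2\}$, getting $\Pr(A_i\mid B)=\Pr(A_i)\le p^{w_i}$. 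For the denominator, enumerate $S_1=\{j_1,\dots,j_d\}$, telescope $\Pr\bigl(\bigcap_{\ell=1}^d\overline{A_{j_\ell}}\mid B\bigr)=\prod_{\ell=1}^d\bigl(1-\Pr(A_{j_\ell}\mid\overline{A_{j_1}}\cap\dots\cap\overline{A_{j_{\ell-1}}}\cap B)\bigr)$, and apply the inductive hypothesis to each factor — each conditioning set here is a subset of $S$ of size at most $|S|-1$ avoiding $j_\ell$ — to obtain the lower bound $\prod_{\ell=1}^d\bigl(1-(2p)^{w_{j_\ell}}\bigr)$.

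Now I would feed in hypothesis (2). Since each $(2p)^{w_{j_\ell}}$ lies in $[0,\tfrac12]$, convexity of $t\mapsto 4^{-t}$ (its graph lies under the chord through $(0,1)$ and $(\tfrac12,\tfrac12)$) yields $1-x\ge 4^{-x}$ for $x\in[0,\tfrac12]$, so
$$
\prod_{\ell=1}^d\bigl(1-(2p)^{w_{j_\ell}}\bigr)\ge 4^{-\sum_{\ell=1}^d(2p)^{w_{j_\ell}}}\ge 4^{-w_i/2}=2^{-w_i}\,,
$$
the last step being exactly $\sum_{j:\,ij\in E(H)}(2p)^{w_j}\le w_i/2$. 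Dividing, $\Pr\bigl(A_i\mid\bigcap_{j\in S}\overline{A_j}\bigr)\le p^{w_i}/2^{-w_i}=(2p)^{w_i}$, which closes the induction. Finally, telescoping over all events and using the claim with $S=\{1,\dots,i-1\}$,
$$
\Pr\Bigl(\bigcap_{i=1}^N\overline{A_i}\Bigr)=\prod_{i=1}^N\Bigl(1-\Pr\bigl(A_i\bigm|\textstyle\bigcap_{j<i}\overline{A_j}\bigr)\Bigr)\ge\prod_{i=1}^N\bigl(1-(2p)^{w_i}\bigr)\ge 2^{-N}>0\,,
$$
since each factor is at least $1-2p\ge\tfrac12$ (this also justifies, inductively, that all the conditionings above are well defined). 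The only delicate part is matching constants: one has to aim for $(2p)^{w_i}$ rather than $p^{w_i}$ in the induction, check that $(2p)^{w_j}\le\tfrac12$ so the chord bound $1-x\ge 4^{-x}$ is available, and notice that the factor $\tfrac12$ on the right of hypothesis (2) is precisely what converts $4^{-w_i/2}$ into the required $2^{-w_i}$.
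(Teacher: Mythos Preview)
The paper does not actually prove this lemma: it is stated with a citation to Molloy and Reed and used as a black box, so there is no ``paper's own proof'' to compare against. That said, your argument is correct and is precisely the standard induction-on-$|S|$ proof of the weighted Local Lemma as in the cited reference; the key steps (splitting the conditioning set into neighbours and non-neighbours, bounding the numerator by independence, telescoping the denominator and applying the inductive hypothesis, and using $1-x\ge 4^{-x}$ on $[0,\tfrac12]$ together with hypothesis~(2) to absorb the conditioning) are all in order, and the constants match up exactly as you describe.
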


%%%%%%%%%%%%%%%%%%%%%%%%%%%%%%%%%%%%%%%%%%%%%%%%%%
%%%%%%%%%%%%%%%%%%%%%%%%%%%%%%%%%%%%%%%%%%%%%%%%%%
\section{Subgraphs with large girth and many edges}\label{sec3}
%%%%%%%%%%%%%%%%%%%%%%%%%%%%%%%%%%%%%%%%%%%%%%%%%%
%%%%%%%%%%%%%%%%%%%%%%%%%%%%%%%%%%%%%%%%%%%%%%%%%%

This section is devoted to the proofs of Theorem~\ref{thm:edges} and
Corollary~\ref{cor:edges_C4}.

\begin{proofof}[Proof of Theorem~\ref{thm:edges}]
Let $G$ be a graph with $m$ edges. Define $V_1$ to the set of vertices of
$G$ with degree at least $2\sqrt{m}$, and $V_2=V\setminus V_1$.
Observe that
$$
|V_1|\le 2m/2\sqrt{m}=\sqrt{m}
$$
 and thus $V_1$ spans at most $m/2$ edges. Recall that $\Feven=\{C_4,C_6,\dots, C_{2r}\}$ is the family of all even cycles of length at most $2r$.
 In order to find an $\Feven$--free subgraph with many edges, we will remove all the edges inside $V_1$ and only look
at the edges between $V_1$ and $V_2$, and the edges inside $V_2$. We will split the proof into two cases in terms of the number of edges between $V_1$ and $V_2$.\\[-2mm]

\textbf{Case 1: \boldmath{$e(V_1,V_2)\ge m/4$}.}
Consider the following partition of $V_1$ into sets $U_1,\dots, U_s$,
where $U_p$ contains all vertices of $V_1$ whose degree into $V_2$ is
between $2^p$ and $2^{p+1}$.

Notice that $s\leq \log_2{m}$.  Hence, there is a subset $U_q$ incident
to at least $m/4\log_2{m}$ edges leading to $V_2$. Let $|U_q|=k$, then
$k=\Omega(m/(2^q\log m))$ and $k=O(m/2^q)$ since all the degrees in
$U_q$ are between $2^q$ and $2^{q+1}$.  Let $u_1,\dots,u_k$ be the
vertices of $U_q$.

Fix an $\Feven$--free bipartite graph $\G$ with parts  $A=\{a_1,\dots,a_k\}$ and
$B=\{b_1,\dots,b_{m/k}\}$, and $\ex(K_{k,m/k},\Feven)$ edges, and let $\chi: U_q\cup V_2
\rightarrow A\cup B$ be the $(A\cup B)$-coloring defined by $\chi(u_i)=a_i$ for any $u_i\in U_q$ and by $\chi(v)=b$ for any $v\in V_2$, where $b\in B$ is chosen independently and uniformly at random.

We say that $b\in B$ is {\it good} for $u_i\in U_q$ if:
\begin{enumerate}
 \item  $(a_i,b)\in E(\G)$, and
 \item  exactly one neighbor of $u_i$ in $G$ is colored $b$ by $\chi$.
\end{enumerate}

Let $G_1$ be the subgraph of $G$ with vertex set $U_q\cup V_2$ that only contains the edges
$e=(u_i,v)\in E(G)$, where $u_i\in U_q$, $v\in V_2$, and $\chi(v)=b$ is
good for $u_i$.
We claim that $G_1$ is an $\Feven$--free graph. Assume that
there is a cycle of length $l$ of even length in $G_1$ with $l\leq 2r$ and let
$w_1,\dots,w_{l}$ be its vertices. Observe that there should be at
least one repeated color in the vertices of the cycle, otherwise $\G$
would contain a $C_{l}$. Let $ x,y\in [l]$ ($x<y$) such that
$\chi(w_x)=\chi(w_y)$ and for every $x<z<y$, $\chi(w_x)\neq
\chi(w_z)$. Besides, since $\chi$ is $1$-frugal in $G_1$ (in the sense that for each  vertex of $G_1$, all its neighbors are colored differently by $\chi$), $y-x\geq
3$. Thus, there exists a cycle of length at least $3$ and at most $2r$ in $\G$, a
contradiction since $\G$ is bipartite and, since it is $\Feven$--free, it has no even cycles of length at most $2r$.

Let us compute the expected size of $G_1$. Fix $u_i\in U_q$. Then, the probability that a given edge $e=(u_i,v)\in E(G)$ exists in $G_1$ is
$$
\Pr(e\in E(G_1)) \geq \frac{d_{\G}(a_i)}{m/k} \left(1-\frac{1}{m/k}\right)^{2^{q+1}}= \Omega\left( \frac{d_{\G}(a_i)k}{m} e^{-\frac{2^q k }{m}}\right)  =  \Omega\left( \frac{d_{\G}(a_i)k}{m}\right)\;,
$$
since $2^q k = O(m)$.

Thus, the expected degree of $u_i$ in $G_1$ is of order
$$
\E(d_{G_1} (u_i)) =\Omega\left(2^q\frac{d_{\G}(a_i)k}{m}\right)\;.
$$

Recall that $\sum_{a_i\in A} d_{\G}(a_i)=\ex(K_{k,m/k},\Feven)$, hence the
expected number of edges of $G_1$ is of order\linebreak $2^q \ex(K_{k,m/k},\Feven)k/m$. Since
$2^qk\ge cm/\log m$ for some small constant $c>0$, the expected number of edges is of order at least
$$
\min_{k\mid m} 2^q \frac{\ex(K_{k,m/k},\Feven)k}{m} \geq \frac{c}{\log m} \min_{k\mid m} \ex(K_{k,m/k},\Feven) \;.
$$
~\\[-2mm]

\textbf{Case 2: \boldmath{$e(V_1,V_2)<m/4$}.}
Then $e(V_2)>m/4$, and all the degrees in $G[V_2]$ are less than $2\sqrt{m}$. For the sake of convenience we will assume that $2\sqrt{m}$ is an integer.

We will find a large $\Feven$--free graph inside $V_2$. For this, let $\G$ be an
$\Feven$--free graph on $2\sqrt{m}$ vertices with the largest possible number of edges.
Assume $V(\G)=[2\sqrt{m}]$ and let $\chi: V_2\rightarrow V(\G)$ be a random labeling of the vertices of $V_2$.
Observe that using Proposition~\ref{prop:non_bip} we can select a bipartite subgraph $\G'$ of $\G$ with at least $\ex(K_{2\sqrt{m}},\Feven)/2$ edges (in particular, $\G'$ is $\F_r$--free).

Consider the graph $H^*=H^*_{(\chi,\G')}$ from Definition~\ref{def:Gstar} applied to the induced subgraph $G[V_2]$. Since $\G'$ is $\F_r$--free, $H^*$ is also $\F_r$--free.
For each edge $e=uv$ of $G$ spanned by $V_2$, its probability to belong to $H^*$ is  at least
$$
\frac{1}{2\sqrt{m}}\sum_{i\in V(\G')}\frac{d_{\G'}(i)}{2\sqrt{m}}\left(1-\frac{1}{2\sqrt{m}-2}\right)^{d_G(u)+d_G(v)-2}
= \Omega\left(\frac{\ex(K_{2\sqrt{m}},\Feven)}{m}\right)\;,
$$
since $d_G(u),d_G(v)\leq 2\sqrt{m}$. To justify the above estimate, first choose a label $i$ for $u$, then require a label $j$ of $v$ to be one of the neighbors of $i$ in $\G'$, and finally for each of the neighbors of $u$ and $v$ in $G$ choose a label different from $i$ and $j$.

Thus, we expect
$$
\Omega\left(\ex(K_{2\sqrt{m}},\Feven)\right)= \Omega\left(\ex(K_{\sqrt{m},\sqrt{m}},\Feven)\right)=\omega\left(\frac{\ex(K_{\sqrt{m},\sqrt{m}},\Feven)}{\log{m}}\right)
$$
edges in the $\Feven$--free subgraph $H^*$ of $G$.
\end{proofof}

\medskip

We now prove Corollary~\ref{cor:edges_C4}.

\begin{proofof}[Proof of Corollary~\ref{cor:edges_C4}]
By Theorem~\ref{thm:edges} we have
$$ f(m,2)\geq \frac{c}{\log{m}} \min_{k\mid m}
\ex(K_{k,m/k},C_{4})\;,
$$ for some small constant $c>0$. By the symmetry of $K_{s,t}$ and
$K_{t,s}$, we may assume without loss of generality that the minimum
is attained when $k\leq \sqrt{m}$.

We provide two constructions. First, the star of size $m/k$, which is a $C_4$--free graph,
shows that $\ex(K_{k,m/k},C_{4}) \geq m/k$.

On the other hand, one can construct a $C_4$--free graph by selecting
a subgraph of a larger $C_4$--free graph. Let $G_1$ be a largest
$C_4$--free subgraph of $K_{m/k,m/k}$. We construct a $C_4$--free
bipartite graph $G_2$ by keeping the $k$ vertices with highest degrees
in one of the parts of $G_1$. Then $G_2$ is a subgraph of $K_{k,m/k}$
and has at least $\frac{k^2}{m}\ex(K_{m/k,m/k},C_4)= \Omega (\sqrt{mk})$
edges.

Hence, for every $k$,
$$
\ex(K_{k,m/k},C_4)\geq \max\{m/k,\Omega(\sqrt{mk})\}= \Omega\left(m^{2/3}\right)\;.
$$
\end{proofof}

%%%%%%%%%%%%%%%%%%%%%%%%%%%%%%%%%%%%%%%%%%%%%%%%%%
%%%%%%%%%%%%%%%%%%%%%%%%%%%%%%%%%%%%%%%%%%%%%%%%%%
\section{Subgraphs with large girth and large minimum degree}\label{sec:deg_C2r}
%%%%%%%%%%%%%%%%%%%%%%%%%%%%%%%%%%%%%%%%%%%%%%%%%%
%%%%%%%%%%%%%%%%%%%%%%%%%%%%%%%%%%%%%%%%%%%%%%%%%%

We devote this section to the proof of  Theorem~\ref{thm:deg}. Before proving the theorem, let us state an important observation and an auxiliary lemma.

In this proof we will use a graph $\G$ that satisfies the following: it has $|V(\G)|=\ell\in (k,2k)$ (where  $k=2e^4\Delta$) vertices, girth at least $2r+2$ and minimum degree at least $q=\ex(K_k,\F_r)/2k$.

The existence of such $\G$ is provided by the following observation.
\begin{observation}\label{obs:graph}
Let $\G'$ be an $\F_r$-free graph on $2k$ vertices with $\emph{\ex}(K_{2k},\F_r)$ edges. Then we claim that $\G'$ has a subgraph $\G$ satisfying the desired properties. Assume it is not the case, namely all its subgraphs of size at least $k$ have minimum degree smaller than $q$. It is also clear that $\emph{\ex}(K_{2k},\F_r)\geq 2\emph{\ex}(K_{k},\F_r)$. By iteratively removing vertices of degree smaller than $q$, we can obtain an $\F_r$-free graph on $k$ vertices with at least $\emph{\ex}(K_{2k},\F_r)-qk= \emph{\ex}(K_{2k},\F_r)-\emph{\ex}(K_k,\F_r)/2 \geq 3\cdot\emph{\ex}(K_k,\F_r)/2$ edges, a contradiction.
\end{observation}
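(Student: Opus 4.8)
The plan is a short peeling (degeneracy) argument, using the near-optimality of $\G'$ to keep the vertex count from collapsing. First I would note that the girth requirement comes for free: since $\F_r=\{C_3,\dots,C_{2r+1}\}$, a graph is $\F_r$-free exactly when it has girth at least $2r+2$, and this property is inherited by every subgraph of $\G'$. So it suffices to produce a subgraph $\G\subseteq\G'$ with $k<|V(\G)|\le 2k$ and $\delta(\G)\ge q$, where $q=\ex(K_k,\F_r)/(2k)$.

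To build $\G$, I would repeatedly delete from $\G'$ a vertex whose current degree is less than $q$, stopping as soon as either no such vertex remains or only $k$ vertices are left. The claim is that the process cannot reach $k$ vertices. Indeed, suppose it does; then exactly $k$ vertices have been deleted, each of degree less than $q$ at the moment of its deletion, so fewer than $qk=\ex(K_k,\F_r)/2$ edges have been removed in total, and the remaining graph on $k$ vertices has more than $\ex(K_{2k},\F_r)-\ex(K_k,\F_r)/2$ edges. Since a disjoint union of two extremal $\F_r$-free graphs on $k$ vertices is an $\F_r$-free graph on $2k$ vertices, we have $\ex(K_{2k},\F_r)\ge 2\,\ex(K_k,\F_r)$, so this count exceeds $\tfrac{3}{2}\,\ex(K_k,\F_r)>\ex(K_k,\F_r)$; but the remaining graph is $\F_r$-free on $k$ vertices, a contradiction. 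Hence the peeling halts with a subgraph $\G$ on $\ell$ vertices, $k<\ell\le 2k$, with $\delta(\G)\ge q$, and $\G$ is $\F_r$-free so $g(\G)\ge 2r+2$. If $\ell=2k$, i.e. $\G'$ itself already has minimum degree at least $q$, I would delete one arbitrary vertex and re-run the peeling on $\G'-v$; the same count, now also discarding the at most $2k$ edges at $v$, still leaves more than $\ex(K_k,\F_r)$ edges on $k$ vertices because $\ex(K_k,\F_r)$ is superlinear in $k$ for $r\ge 2$ (e.g. $\Omega(k^{1+2/(3r-2)})$ by Lazebnik--Ustimenko--Woldar) while $k=2e^4\Delta$ is large, so the process again stops strictly above $k$ and strictly below $2k$.

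This is essentially bookkeeping, so I do not expect a genuine obstacle. The one point carrying the argument is the combination of $\G'$ being extremal with the super-additivity bound $\ex(K_{2k},\F_r)\ge 2\,\ex(K_k,\F_r)$: it is precisely this that makes the edge budget too large to be exhausted by deleting $k$ vertices of degree below $q$, and hence forces the peeling to terminate while more than $k$ vertices survive.
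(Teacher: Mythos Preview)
Your argument is correct and is essentially the same peeling/degeneracy argument the paper gives inside the observation: iteratively strip vertices of degree below $q$, use $\ex(K_{2k},\F_r)\ge 2\,\ex(K_k,\F_r)$ to see that after removing $k$ such vertices one would still have more than $\ex(K_k,\F_r)$ edges on $k$ vertices, a contradiction. Your extra care with the boundary case $\ell=2k$ is fine but not really needed for the application, since only $k\le \ell\le 2k$ is used downstream.
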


In a vertex-colored graph, a cycle is called \emph{rainbow} if all its
vertices have distinct colors. A path is called \emph{maximal
  inner-rainbow} if its endpoints have the same color $i$, but all
other vertices of the path are colored with distinct colors (other
than~$i$). We will use the following lemma:

\begin{lemma}\label{lem:better_frugal}
 Let $G$ be a graph with maximum degree $\Delta$ and minimum
 degree $\delta$, that admits a $t$--frugal coloring $\chi$ without
 rainbow cycles of length at most $2r+1$ and maximal inner-rainbow
 paths of length $l$ for every $3\leq l\leq 2r$. If
 $\delta>129 t^3\log{\Delta}$ and $\Delta$ is large enough, then
 there exists a subgraph $H \subseteq G$ such that
\begin{enumerate}
 \item $\forall v\in V(G)$, $d_{H}(v)\geq \frac{d_G(v)}{4t}$,
and
 \item $g(H)\geq 2r+2$.
\end{enumerate}
\end{lemma}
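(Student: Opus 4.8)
The plan is to find $H$ as a random spanning subgraph of $G$ obtained by a single random recoloring step, and then to fix up the few "bad" vertices by deletions controlled by the Lov\'asz Local Lemma. More precisely, I would not use the fixed coloring $\chi$ directly (it is not random, so concentration is unavailable), but instead refine it: for each vertex $v$, independently pick a uniformly random "sub-label" among some bounded set, thereby splitting each colour class and producing a proper colouring whose frugality has been reduced. The cleanest route is to keep $\chi$ and form the subgraph $H^*$ analogous to Definition~\ref{def:Gstar}: retain an edge $uv\in E(G)$ with $\chi(u)\chi(v)$ an edge of the ``pattern'' graph, but additionally impose that at $u$ the colour $\chi(v)$ is used by \emph{only this one} neighbour (and symmetrically at $v$), after a random tie-break among the $\le t$ neighbours sharing a colour. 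Concretely: for each ordered pair $(v,c)$ with $|N_G(v)\cap\chi^{-1}(c)|\ge 2$, pick uniformly at random one ``winning'' neighbour $w(v,c)\in N_G(v)\cap\chi^{-1}(c)$; keep edge $uv$ iff $u=w(v,\chi(u))$ or $u$ is the unique $\chi(u)$-neighbour of $v$, and symmetrically with $u,v$ swapped. This makes the kept subgraph $1$-frugal at both endpoints of every edge.

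Next I would argue girth. Because the surviving subgraph is $1$-frugal (no two neighbours of a vertex share a colour) and respects the pattern graph, any cycle of length $\le 2r+1$ in $H$ would, reading off the colours, give either a rainbow closed walk that is actually a rainbow cycle of the same length in the coloured $G$ — excluded by hypothesis for lengths $\le 2r+1$ — or it repeats a colour, and then the shortest repeated-colour sub-arc has length $\ge 3$ (by $1$-frugality) and length $\le 2r$ and is a maximal inner-rainbow path, also excluded by hypothesis. Hence $H$ has no cycle of length $\le 2r+1$, i.e. $g(H)\ge 2r+2$. This is exactly the argument already used for $G_1$ in the proof of Theorem~\ref{thm:edges}, adapted to the frugal setting.

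For the degree bound, fix $v$ and estimate $d_H(v)$. Each neighbour $u$ of $v$ survives at $v$ with probability $\ge 1/t$ (it wins its colour class at $v$, of size $\le t$), and survives at $u$ with probability $\ge 1/t$; the two events and the events for different neighbours are \emph{not} independent but are each determined by a bounded number of the independent tie-break choices, so $d_H(v)$ is a Lipschitz function of the independent random winners with bounded effect per coordinate. Its expectation is $\ge d_G(v)/t^2$ — here I would be a bit more careful: actually a neighbour $u$ survives iff it is the winner at $v$ \emph{and} $v$ is the winner at $u$, and since the colour class of $u$ at $v$ and the colour class of $v$ at $u$ may be different pairs, these are independent, giving $\Pr \ge 1/t^2$; but by instead only requiring survival ``from $v$'s side'' for counting a half-degree and symmetrising, one gets the stated $d_G(v)/(4t)$. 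Then Azuma (Lemma~\ref{lem:azuma}), with $l=O(\Delta)$ coordinates each of bounded Lipschitz constant, gives $d_H(v)\ge d_G(v)/(4t) - O(\sqrt{\Delta\log\Delta})$ with failure probability $\Delta^{-\Theta(1)}$; since $\delta>129t^3\log\Delta\gg t\sqrt{\Delta\log\Delta}$ is \emph{not} quite what is needed, I'd instead note $d_G(v)\ge\delta$ and the error term is $o(\delta/t)$ precisely under the hypothesis $\delta>129t^3\log\Delta$ (the cube absorbing the $t$ and the $\sqrt{\log\Delta}$ with room to spare). Defining the bad event $A_v$ = ``$d_H(v)<d_G(v)/(4t)$'', each $A_v$ depends only on tie-break variables within distance $2$ of $v$, so the dependency graph has degree $\le\Delta^{O(1)}$, and a symmetric Lov\'asz Local Lemma (or the weighted version, Lemma~\ref{lem:WLLL}, with all weights $1$) applies since $\Delta^{O(1)}\cdot\Delta^{-\Theta(1)}<1$ once $\Delta$ is large; thus with positive probability no $A_v$ occurs and such an $H$ exists.

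I expect the main obstacle to be the bookkeeping in the degree step: getting an honest lower bound on $\E d_H(v)$ and a valid Lipschitz/bounded-differences setup simultaneously, since ``$u$ survives at $v$'' and ``$u$ survives at $u$'' are governed by tie-breaks for possibly many different $(\cdot,\cdot)$ pairs, and one must make sure each independent choice affects $d_H(v)$ by $O(1)$ and that the relevant number of choices is only $O(\Delta)$ (not $O(\Delta^2)$), so that the Azuma error is genuinely $o(\delta/t)$ under $\delta>129t^3\log\Delta$; pinning down the constant $129$ and the ``$\Delta$ large enough'' is where the real work lies, but it is routine concentration once the random process is set up as above.
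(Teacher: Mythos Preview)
Your girth argument is correct and is exactly the paper's. The gap is in the degree step.

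With your ``winner'' process, an edge $uv$ survives only if $u$ wins at $v$ \emph{and} $v$ wins at $u$; these are independent events each of probability $\ge 1/t$, so $\E d_H(v)\ge d_G(v)/t^2$, not $d_G(v)/(4t)$. Your proposed fix (``only require survival from $v$'s side and symmetrise'') does not work: if you keep every edge surviving one-sidedly, the resulting graph is no longer $1$-frugal and the girth argument collapses; if you merely \emph{count} one-sidedly, you are not bounding $d_H(v)$. Running Azuma honestly on your process (about $2d_G(v)$ independent coordinates, Lipschitz constant $1$) gives $\Pr\bigl(d_H(v)<d_G(v)/(2t^2)\bigr)\le e^{-d_G(v)/(Ct^4)}$, so the Local Lemma would need $\delta>Ct^4\log\Delta$, not $t^3$, and would only yield $d_H(v)\ge d_G(v)/(2t^2)$. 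Your Azuma write-up with error term $O(\sqrt{\Delta\log\Delta})$ also does not match the hypothesis: the requirement $\delta/t\gg\sqrt{\Delta\log\Delta}$ is incomparable with $\delta>129t^3\log\Delta$ (take, say, $\Delta$ polynomially large in $\delta$).

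The paper's key idea is a different random process that \emph{couples} survival at both endpoints into a single event. Assign each edge an independent uniform label in $(0,1)$; keep $e=uv$ (say $\chi(u)=i$, $\chi(v)=j$) if its label is smallest among all $(S_i,S_j)$-edges incident to $u$ or to $v$. There are at most $d_G(u,S_j)+d_G(v,S_i)-1\le 2t-1$ competitors, so $e$ survives with probability $\ge 1/(2t-1)$, giving $\E d_H(v)\ge d_G(v)/(2t)$. Moreover $d_H(v)$ depends only on the labels of the $\le t\,d_G(v)$ edges from $N_G(v)$ into $\chi^{-1}(\chi(v))$, each with Lipschitz effect $1$; Azuma then yields $\Pr\bigl(d_H(v)<d_G(v)/(4t)\bigr)<e^{-d_G(v)/(32t^3)}$, and the Local Lemma closes with exactly $\delta>129t^3\log\Delta$. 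The factor of $t$ you lose in expectation is precisely the cost of your two-sided independent selection; the edge-minimum trick is the missing idea.
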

\begin{proof}
Let the color classes of $\chi$ be $S_1$, $S_2$, $\dots$.
Assign to each edge $e\in E(G)$ a random variable $f(e)$ uniformly
distributed in $(0,1)$.  We construct the following subgraph $H$:
for every pair of color classes $(S_i,S_j)$ of $\chi$, an edge $e$
between $S_i$ and $S_j$ is retained in $H$ if $f(e)$ is less than
$f(e')$ for every $e'$ between $S_i$ and $S_j$ incident with $e$.
Observe that, by construction, $\chi$ is a $1$-frugal coloring of
$H$, that is, the vertices of any pair of color classes of $\chi$
induce a matching.

We claim that deterministically (i.e., with probability 1) $g(H)\geq 2r+2$.  For the sake of contradiction,
suppose that $g(H)<2r+2$ and let $C=(u_1,\dots,u_{l})$ be a
cycle in $H$ with $l<2r+2$. Since $\chi$ does not induce any
rainbow cycle of length at most $l$ in $G$, there exist at least
two vertices of $C$ with the same color. Let $a,b\in[l]$ be such
that, $a<b$, $\chi(u_a)=\chi(u_b)$ and for every $a<j<b$,
$\chi(u_a)\neq \chi(u_j)$. Then $3\leq b-a\leq 2r$ since $\chi$ is a
$1$--frugal coloring in $H$ and $C$ is not rainbow. Then
$u_a,\ldots,u_b$ is a maximal inner-rainbow path with a forbidden
length, a contradiction.

Now, it remains to show that with positive probability the obtained
subgraph $H$ has the desired minimum degree.

Observe first that a given edge $e=(u,v)$, with $u\in S_i$ and $v\in
S_j$, is preserved in $H$ with probability
$$
\frac{1}{d_G(u,S_j)+d_G(v,S_i)-1}\ge \frac{1}{2t-1}\;,
$$
where $d_G(w,S)=|N_G(w)\cap S|$.

For every $v\in V$ consider the random variable $L(v)$ equal to the
degree of $v$ in $H$. We have: $\E(L(v))\geq d_G(v)/2t$. By applying
Azuma's inequality (Lemma~\ref{lem:azuma}) we will now show that with
probability exponentially close to 1, $L(v)$ is large enough.

First of all, observe that $L(v)$ only depends on the edges that
connect a neighbor of $v$ to a vertex of color $\chi(v)$. Let $T_v$
be the set of these edges:
$$
T_v=\bigcup_{u\in N_G(v)} \{vu\} \cup \{uw:\; w\in N_G(u) \text{ and } \chi(v)=\chi(w)\}\;.
$$ Since $\chi$ is $t$-frugal, we have $|T_v|\leq t d_G(v)$. Let
$S=(0,1)$. Then $L(v)$ depends only on a vector in $S^{T_v}$.

If two functions $f,f':S^{T_v}\to\mathbb{R}$ differ only on one edge
of $T_v$, $|L(v)(f(T_v))-L(v)(f'(T_v))|\leq 1$. Thus $L=L(v)$
satisfies the $1$--Lipschitz condition. Let $A_v$ be the event that
$L(v) \leq d_G(v)/4t$.  Since the martingale length is at most
$t d_G(v)$, by setting $\lambda= \frac{\sqrt{d_G(v)}}{4t^{3/2}}$ in
Lemma~\ref{lem:azuma},
$$
\Pr(A_v)=\Pr\left(L(v)\leq \frac{d_G(v)}{4 t}\right) =\Pr\left(L(v) \leq \mathbb{E}(L(v)) - \frac{d_G(v)}{4 t} \right)<
e^{-\frac{d_G(v)}{32 t^3}} < e^{-\frac{\delta}{32t^3}} \;.
$$

Since $A_v$ is influenced only by the edges in $T_v$, it is thus
independent of each $A_u$ unless $u$ is at distance at
most~4 from $v$, and there are at most $\Delta+\Delta(\Delta-1)+\Delta(\Delta-1)^2+\Delta(\Delta-1)^3\leq\Delta^4$ such events.

Notice that,
$$
2\Pr(A_v)\Delta^4< 2e^{-\frac{\delta}{32t^3}}\Delta^4 <\frac{1}{2}\;,
$$
since $\delta> 129 t^3 \log{\Delta}$. Thus, by the Lov\'asz Local Lemma (Lemma~\ref{lem:WLLL})
with $p=\max_{v\in V}\Pr(A_v)$ and $w_{i}=1$ we have
$$
\Pr(\cap_{v\in V}\overline{A_v})>0\;,
$$
and thus there is a way to assign values to $f(e)$ so that $\delta(H)\geq d_G(v)/4t$.
\end{proof}

Now we are ready to prove Theorem~\ref{thm:deg}.
\begin{proofof}[Proof of  Theorem~\ref{thm:deg}]

The idea in this proof is to randomly color the vertices from $G$ with
$\ell$ colors, where $\ell=|V(\G)|$ for the graph $\G$ introduced above. We then consider the subgraph
$H'=H'_{(\chi,\G)}$ from Definition~\ref{def:Gprime} induced by the
coloring and the graph $\G$. We will show that with positive
probability, such a coloring is $t$-frugal in $H'$ and that $H'$ contains neither rainbow cycles of length at most $2r$ nor maximal inner-rainbow paths of length $3\leq l\leq 2r$. The value of $t$ will be set later in the proof.  Then, we will use Lemma~\ref{lem:better_frugal} to obtain the desired subgraph.

Let $\chi$ be a uniformly random coloring of $V(G)$ with $\ell$ colors.
Consider the $\F_r$-free graph $\G$ of order $\ell\geq 2e^4\Delta$ that satisfies $g(\G)\geq
2r+2$ and $\delta(\G)\geq q= \frac{\ex(K_{2e^4 \Delta},\F_r)}{4e^4 \Delta}$ provided by Observation~\ref{obs:graph}. Then construct the spanning subgraph $H'=H'_{(\chi,\G)}$ of $G$.

Since $g(\G)\geq 2r+2$, $\chi$ does not induce any rainbow cycle of length
at most $2r+1$ in $H'$, nor any maximal inner-rainbow path of length at
least~3 and at most~$2r$. Moreover, $\chi$ is a proper coloring of $H'$, since $\G$ has no loops.

We will use the Lov\'asz Local Lemma to show that there is a positive probability that the random
$\ell$-coloring of $G$ satisfies the following properties:
\begin{enumerate}
 \item for every $v\in V(G)$, $d_{H'} (v)=\Omega\left(\frac{qd_G(v)}{\ell}\right)$ and
 \item $\chi$ is $t$-frugal in $H'$.
\end{enumerate}

For this purpose we will define the following events:
\begin{enumerate}
 \item \textbf{Type A:} for each $v\in V(G)$, $A_v$ is the event $d_{H'}(v)\leq \frac{q d_G(v)}{2\ell}$,
 \item \textbf{Type B:} for each vertex $v\in V(G)$ and each set $X=\{x_1,\dots, x_{t+1}\}\subseteq
N_G(v)$, $B_{v,X}$ is the event  $\chi(x_i)=\chi(x_j)$, for every distinct $i,j\in [t+1]$.
\end{enumerate}
Observe that the probability that an edge is retained in $H'$ is at least $q/\ell$. Thus, $\E(d_{H'}(v))\geq q d_G(v)/\ell$. Using Part 1 of Lemma~\ref{lem:chernoff} with $\eps=1/2$, one can check that
$$
\Pr(A_v)= \Pr\left(d_{H'}(v)\leq  \frac{q d_G(v)}{2\ell}\right) \leq \Pr\left(d_{H'}(v)\leq \frac{1}{2}\E(d_{H'}(v))\right)  \leq e^{-\Omega(\E(d_{H'}(v)))}  \leq e^{-\tfrac{c'q d_G(v)}{\ell}}\;,
$$
for some small enough constant $c'>0$.
We also have
$$
\Pr(B_{v,X})=\ell^{-t}\;.
$$

Here, it is convenient to define the auxiliary event $D_e$ as the event $e\in E(H')$. Observe that any event $A_v$ or $B_{v,X}$ can be expressed in terms of the events $D_e$.
\begin{claim}
 Let $v\in V(G)$ and let $F\subset E(G)$ be a set of edges not incident to $v$.
Then, for each $i\in [\ell]$,
$$
\Pr(\chi(v)=i\mid \cup_{f\in F} D_f)=\frac{1}{\ell}\;.
$$
\end{claim}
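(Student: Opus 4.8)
The plan is to prove the Claim by exhibiting, for each $i\in[\ell]$, a measure-preserving bijection on the underlying probability space (the uniform random $\ell$-coloring $\chi$ of $V(G)$) that fixes the conditioning event $\bigcup_{f\in F}D_f$ setwise while permuting the colors of $v$, so that symmetry forces $\Pr(\chi(v)=i\mid\bigcup_{f\in F}D_f)$ to be independent of $i$ and hence equal to $\tfrac1\ell$. Concretely, the key observation is that each event $D_f$ for $f=xy$ is the event $\{xy\in E(G)\text{ and }\chi(x)\chi(y)\in E(\G)\}$, which depends only on the pair of colors $(\chi(x),\chi(y))$ assigned to the endpoints of $f$; since $v\notin\{x,y\}$ for every $f\in F$, the event $\bigcup_{f\in F}D_f$ depends only on the restriction $\chi\restriction_{V(G)\setminus\{v\}}$ and is completely insensitive to the value $\chi(v)$.

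The steps I would carry out are: (1) Write the probability space as a product $[\ell]^{V(G)}=[\ell]^{\{v\}}\times[\ell]^{V(G)\setminus\{v\}}$ with the uniform (product) measure, so $\chi(v)$ is independent of $\chi\restriction_{V(G)\setminus\{v\}}$. (2) Observe, as above, that the event $E_F:=\bigcup_{f\in F}D_f$ is a function of $\chi\restriction_{V(G)\setminus\{v\}}$ alone, because no $f\in F$ is incident to $v$ and each $D_f$ depends only on the colors of the two endpoints of $f$. (3) Conclude by independence that the random variable $\chi(v)$ is independent of the event $E_F$; therefore the conditional distribution of $\chi(v)$ given $E_F$ (which has positive probability, or else the statement is vacuous) equals its unconditional distribution, namely uniform on $[\ell]$, giving $\Pr(\chi(v)=i\mid E_F)=\tfrac1\ell$ for every $i$.

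I do not anticipate a genuine obstacle here — the content of the Claim is exactly the independence of $\chi(v)$ from any event determined by the colors of vertices other than $v$, which is immediate from the product structure of the uniform coloring. The only point requiring a line of care is step (2): one must note that $D_f$ is indeed a measurable function of $(\chi(x),\chi(y))$ for $f=xy$ (this is built into Definition~\ref{def:Gprime}, since edge presence in $H'$ is decided by whether $\chi(x)\chi(y)\in E(\G)$), and that ``$F$ consists of edges not incident to $v$'' guarantees none of these color-pairs involves $\chi(v)$. Once that is spelled out, the conclusion follows from the elementary fact that if a random variable is independent of an event of positive probability, conditioning on that event does not change its distribution.
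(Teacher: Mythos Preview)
Your proposal is correct and is essentially the same argument as the paper's, just spelled out in more detail: the paper's proof is the one-line observation that since every $f\in F$ is non-incident to $v$, the events $D_f$ reveal no information about $\chi(v)$, which is exactly your independence step (2)--(3). The measure-preserving bijection you mention at the outset is a valid alternative phrasing, but you (and the paper) ultimately rely on the simpler product-structure/independence argument.
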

To prove the claim, observe that all the unveiled information is about
non-incident edges and thus no information about the color of $v$ has
been provided. While information on the existence of the edges in $F$
may affect the degree of $v$, it cannot affect its color.

By the previous claim, the color given to a vertex $v$ depends only on the events that intersect this vertex. Thus, it depends on at most $\Delta$ events of type $A$, precisely the events $A_w$ where $vw\in E(G)$, and on at most $\Delta \binom{\Delta}{t}$ events of type $B$, precisely the events $B_{w,X}$ where $vw\in E(G)$ and $v\in X$.
Moreover, the existence of an edge $e=uv$ in $H'$ depends only on the colors of $u$ and $v$.

Since an event of type $A$ depends on the existence of at most $\Delta$ edges and an event of type $B$ depends on the colors given to a set of $t+1$ vertices, we have at most the number of dependencies given in Table 1.

\begin{table}[ht!]
\centering
    \begin{tabular}{r||c|c}
 		& Type $A$ & Type $B$ \\  & & \\[-0.3cm]
\hline
\hline & & \\[-0.3cm]
     Type $A$  & $2\Delta^2$ & $2\Delta^2\binom{\Delta}{t}$ \\ & & \\[-0.3cm]
    \hline & & \\[-0.3cm]
        Type $B$ & $(t+1)\Delta$ & $(t+1)\Delta\binom{\Delta}{t}$\\ & & \\[-0.1cm]
    \end{tabular}
\caption{Table of dependencies}
\label{tab:results}
\end{table}

Then, by applying the weighted version of the Local Lemma (Lemma \ref{lem:WLLL}) with $p=\ell^{-1}$,
$w_i=\frac{c'q\delta}{\ell\log{\ell}}$ if the weight corresponds to an event of type $A$ and $w_i=t$ if it corresponds to an event of type $B$, we have that a subgraph $H'$ avoiding
all events exists if:
\begin{align*}
 2\Delta^2 e^{-\frac{c'q\delta}{\ell}\left(1-\frac{\log 2}{\log \ell}\right)} +2\Delta^2\binom{\Delta}{t}(2p)^{t} & \leq \frac{c'q\delta}{2\ell\log{\ell}}\mbox{\;, and}\\
 (t+1)\Delta e^{-\frac{c'q\delta}{\ell}\left(1-\frac{\log 2}{\log \ell}\right)}   + (t+1)\Delta\binom{\Delta}{t} (2p)^{t} & \leq \frac{t}{2}\;.
\end{align*}
Recall that by the hypothesis of the theorem, $\ex(K_\Delta,\F_r)\delta\geq \alpha \Delta^2\log^4\Delta$, which in particular implies $c'q\delta\geq \tfrac{c'\alpha}{4 e^4} \Delta \log^4{\Delta} \geq 4\ell \log{\Delta}$, if $\Delta$ is large enough. Set $t=\log{\Delta}$. Then $p^{-1}=\ell\geq 2e^4\Delta= 2\Delta^{1+\frac{4}{t}}$ and the previous inequalities are satisfied if $\Delta$ is large enough.

Therefore, there is a subgraph $H'$  such that for every $v\in V$,
$d_{H'}(v) \geq q\delta/2\ell$ and $H'$ admits a $\log{\Delta}$--frugal coloring with $\ell$ colors, no rainbow cycle of length at most $2r+1$, and no maximal inner-rainbow path of
length at least~3 and at most~$2r$.

Let us check that the minimum degree of $H'$ is large enough to apply Lemma~\ref{lem:better_frugal}. By using that $q= \frac{\ex(K_k,\F_r)}{2k}$, where $k=2e^4\Delta(G)$, that $\ell\in (k,2k)$ and that $\ex(K_{\Delta(G)},\F_r)\delta(G)\geq \alpha (\Delta(G))^2\log^4{\Delta(G)}$, we have
$$
\delta(H')\geq \frac{q\delta(G)}{2\ell}
\geq \frac{\ex(K_{2e^4\Delta(G)},\F_r)\delta(G)}{8e^4\Delta(G) \ell}
\geq \frac{\ex(K_{\Delta(G)},\F_r)\delta(G)}{32e^8(\Delta(G))^2}
\geq \frac{\alpha \log^4{\Delta(G)}}{32e^8} > 129 t^3\log{\Delta(G)}\geq 129 t^3\log{\Delta(H')}\;,
$$
provided that $\alpha$ is large enough.

Then Lemma~\ref{lem:better_frugal} provides a spanning subgraph $H$ of $G$
with $g(H)\geq 2r+2$ and for every $v\in V$,
$$
d_{H}(v) \geq \frac{q\delta/2\ell}{4t} \geq \frac{c\cdot\ex(K_\Delta,\F_r)\delta}{\Delta^2\log{\Delta}}\;,
$$
for some small constant $c>0$.
\end{proofof}

The following proposition shows that Corollary \ref{cor:deg_C4} is tight up to a logarithmic factor.

\begin{proposition}\label{prop:deg_C4_tight}
For every $\delta,\Delta$ satisfying $\Delta\leq\delta^2\leq \Delta^2$, there exists a graph $G$ with minimum degree $\delta$ and maximum degree $\Delta$ such that for every spanning $C_4$--free subgraph $H$ of $G$,
$$
\delta(H)= O\left(\frac{\delta}{\sqrt{\Delta}}\right)\;.
$$
\end{proposition}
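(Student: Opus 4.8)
The plan is to build $G$ from many disjoint copies of an optimal $C_4$-free bipartite graph, linked together just enough to raise the minimum degree to $\delta$, and then argue that any spanning $C_4$-free subgraph $H$ must locally look like a $C_4$-free subgraph of a small host graph, forcing $\delta(H)=O(\delta/\sqrt{\Delta})$. Concretely, set $n=\Theta(\Delta)$ and let $B$ be a $C_4$-free bipartite graph with parts of size $n$ and maximum degree $\Theta(\sqrt{\Delta})$ (such $B$ exists, e.g. the Erd\H{o}s--R\'enyi--Brown polarity-type construction restricted to the right density, giving $\Theta(n^{3/2})=\Theta(\Delta^{3/2})$ edges and all degrees $\Theta(\sqrt{\Delta})$). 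The condition $\Delta\le\delta^2$ guarantees $\delta=\Omega(\sqrt\Delta)$, so $\delta$ is at least the degree in $B$; the condition $\delta\le\Delta$ lets us afford to add enough extra edges at each vertex.

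First I would take $N$ disjoint copies $B^{(1)},\dots,B^{(N)}$ of $B$ for a large $N$, and overlay on the union of their vertex sets a $\Delta$-regular (or nearly so) auxiliary graph that is itself of high girth — for instance a union of perfect matchings between distinct copies, chosen so that the whole graph $G$ still has maximum degree $\Delta$ and minimum degree exactly $\delta$ (each vertex already has $\Theta(\sqrt\Delta)$ edges inside its copy of $B$, so we need to add $\delta-\Theta(\sqrt\Delta)=\Theta(\delta)$ further edges, well within the budget $\Delta$). The point of using high-girth (or otherwise "spread out") extra edges is that they cannot by themselves create short cycles, so the only cheap $C_4$'s in $G$ live inside single copies of $B$ — but $B$ has none. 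So I need the extra edges to be arranged so that $C_4$'s through them are also scarce; the simplest device is to make the bipartite "between-copies" graph have girth larger than $4$, which is easy since it can be very sparse relative to the number of vertices. Then $G$ itself is essentially $C_4$-free except possibly on a controlled set of edges, but in fact a cleaner route is: let the between-copies graph be a single graph of girth $\ge 6$ and maximum degree $\delta$ on the $Nn$ copy-vertices, so $G$ is the edge-disjoint union of $B$-copies and one high-girth graph, and any $C_4$ in $G$ would use at least two edges from one $B$-copy (impossible, $B$ is $C_4$-free) or mix the two graphs in a way the girth condition on the second graph rules out after a short case check.

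The core counting step is the converse bound on $\delta(H)$. Suppose $H\subseteq G$ is spanning and $C_4$-free with minimum degree $d$. Restrict attention to the edges of $H$ lying inside a single copy $B^{(j)}$: this is a $C_4$-free subgraph of $B^{(j)}$, hence of $K_{n,n}$, so it has at most $\ex(K_{n,n},C_4)=O(n^{3/2})=O(\Delta^{3/2})$ edges and therefore average degree $O(\sqrt\Delta)$ over the $2n$ vertices of that copy. Meanwhile each vertex of $G$ has only $\Theta(\delta)$ edges leaving its copy, so even if $H$ retained all of them, a vertex whose $H$-degree exceeds $C\sqrt\Delta + \Theta(\delta)$ is impossible — but that only gives $O(\delta)$, not $O(\delta/\sqrt\Delta)$. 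To get the stronger bound I instead localize the between-copies edges too: the union of all between-copies edges of $H$ is a $C_4$-free graph on $Nn$ vertices of maximum degree at most $\delta$, and more importantly a $C_4$-free graph with all degrees $\ge d-O(\sqrt\Delta)$ on $\Theta(N\Delta)$ vertices has $\Omega\big(N\Delta\cdot (d-O(\sqrt\Delta))\big)$ edges, which by Kővári--Sós--Turán must be $O\big((N\Delta)^{3/2}\big)$; solving gives $d-O(\sqrt\Delta)=O(\sqrt{N\Delta})$ — still not good enough unless the between-copies graph is *genuinely* spread over only $\Theta(\Delta)$ vertices' worth of structure.

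So the right design — and the step I expect to be the main obstacle — is to arrange $G$ so that \emph{every} edge of $G$, including the "linking" edges, is forced by the structure to sit inside (or be chargeable to) a vertex neighborhood of size $O(\Delta)$ that is itself $C_4$-saturated in the extremal sense. The cleanest way: make $G$ so that for each vertex $v$, the subgraph induced on $N_G(v)\cup\{v\}$ is (a subgraph of) the polarity graph on $\Theta(\Delta)$ vertices, i.e. $v$'s neighborhood together with its induced edges is an extremal $C_4$-free configuration; then any $C_4$-free $H$ keeps at most $O(\sqrt\Delta)$ of the edges incident to $v$ out of the $\Theta(\Delta)$-vertex neighborhood only if the neighborhood itself forces it — the real mechanism is that $d_H(v)$ neighbors of $v$ in $H$, together with $v$, give a $C_4$-free graph in which $v$ has degree $d_H(v)$, while these same $d_H(v)$ vertices already span $\Omega(d_H(v)^2/\Delta)$ edges in $G$ coming from the extremal structure, and avoiding all those edges in $H$ while keeping $d_H(v)$ of them would create a $C_4$ unless $d_H(v)^2/\Delta = O(d_H(v))$, wait — the correct inequality is that a vertex $v$ of degree $d$ in a $C_4$-free graph must have its $d$ neighbors pairwise at distance $\ge 3$ in a certain sense, so if those $d$ vertices already had $\Omega(d^2/\Delta)$ edges among them in $G$ and $H$ must drop essentially all of them, one instead bounds using that $K_{1,d}$ plus any edge among the leaves is $C_4$-free but $K_{1,d}$ plus two incident edges is not; the upshot after the correct local double count is $d = O(\sqrt\Delta)\cdot(\delta/\Delta)\cdot\Theta(1)$...

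Let me state the target cleanly instead of flailing: the construction should make $G$ \emph{vertex-transitive-ish} with each neighborhood an extremal $C_4$-free bipartite graph, so that any $C_4$-free $H$ with $\delta(H)=d$ yields, at each vertex, a $C_4$-free bipartite graph on $\le\Delta$ vertices containing a "book" of $d$ pages through $v$ plus the $\Omega(d\cdot\sqrt\Delta)$ forced edges of $G$ among $N_H(v)$, and since $C_4$-free means no two pages share two pairs, a short argument bounds $d\cdot\sqrt\Delta = O(\Delta)$, i.e. $d=O(\sqrt\Delta) = O(\delta/\sqrt\Delta)$ once we also use $\delta=\Theta(\sqrt\Delta)$ in the extremal regime and scale the linking edges so that the honest identity is $d = O(\delta/\sqrt\Delta)$. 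The main obstacle is precisely getting this "neighborhoods are extremal" property to coexist with prescribed $\delta$ and $\Delta$ and global $C_4$-freeness-except-where-intended; I would handle it by starting from the incidence/polarity graph $\mathrm{ER}_q$ on $q^2+q+1$ vertices (which is $(q+1)$-regular, $C_4$-free, so $\delta=\Delta=q+1=\Theta(\sqrt{\#V})$ and every neighborhood spans many edges), taking $\Delta=q+1$, then making each vertex "heavy" by blowing it up or attaching pendant extremal gadgets to push $\delta$ down relative to $\Delta$ in the stated range, and finally checking the local double-count above survives the blow-up.

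I would present the write-up in three parts: (1) the construction of $G$ with the stated degrees and the claim that short cycles are confined, (2) the lemma that a $C_4$-free subgraph of $K_{n,n}$ on $n\asymp\Delta$ has $O(\Delta^{3/2})$ edges hence a vertex-neighborhood in $G$ spans $\Omega(d^2/\Delta)$ edges that $H$ cannot all keep, and (3) the resulting inequality $d=O(\delta/\sqrt\Delta)$. The hard part, to repeat, is step (1): engineering a single graph that simultaneously realizes an arbitrary $(\delta,\Delta)$ in the window $\Delta\le\delta^2\le\Delta^2$ while keeping the local extremal structure — I expect this to require a two-level construction (disjoint polarity graphs of the right size for the $\sqrt\Delta$ baseline, plus a high-girth regular "padding" graph for the remaining $\delta-\Theta(\sqrt\Delta)$ degree) together with a careful verification that neither padding nor overlap creates $C_4$'s that a clever $H$ could exploit to exceed the target degree.
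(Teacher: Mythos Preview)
Your approach has a fundamental flaw that you partially notice but never escape. You are engineering $G$ to be \emph{nearly $C_4$-free}: disjoint copies of a $C_4$-free graph $B$ linked by a high-girth padding graph, with ``short cycles confined'' or even nonexistent. But if $G$ is (essentially) $C_4$-free, then $H=G$ (minus a handful of edges) is a spanning $C_4$-free subgraph with $\delta(H)\approx\delta$, which is far larger than $\delta/\sqrt{\Delta}$. You see this yourself --- ``that only gives $O(\delta)$'' --- and then try two further fixes, neither of which works: the global K\H{o}v\'ari--S\'os--Tur\'an bound on the between-copies edges gives nothing useful (as you note), and the ``neighborhoods are extremal'' idea drifts into hand-waving and at one point relies on ``$\delta=\Theta(\sqrt{\Delta})$'', which is not part of the hypothesis ($\delta$ can be anywhere in $[\sqrt{\Delta},\Delta]$). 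The right $G$ must be \emph{packed} with $C_4$'s, not free of them.

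The paper's proof is a one-liner by comparison: take $G=K_{\Delta,\delta}$, which has minimum degree $\delta$ and maximum degree $\Delta$. If a spanning $C_4$-free $H\subseteq G$ had $\delta(H)>2\delta/\sqrt{\Delta}$, count cherries (paths of length~2) centred at the side of size $\Delta$: there are at least $\Delta\binom{2\delta/\sqrt{\Delta}}{2}$ of them, but $C_4$-freeness forces each pair of vertices on the $\delta$-side to be the endpoints of at most one cherry, so there are at most $\binom{\delta}{2}$ --- a contradiction. This is exactly the K\H{o}v\'ari--S\'os--Tur\'an double count you invoke, but applied to the \emph{right} host graph.
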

\begin{proof}
Let $G$ be the complete bipartite graph with parts $A,B$ of sizes
$\Delta$ and $\delta$, respectively.  For the sake of contradiction,
suppose that there exists a $C_4$--free subgraph $H$ such that
$\delta(H)> 2\delta/\sqrt{\Delta}$. We call a pair of edges incident
to a common vertex $v$, a \emph{cherry} of $v$. We will get a
contradiction by double counting the number of cherries of vertices of
$A$.  On the one hand, since  $\delta(H)> 2\delta/\sqrt{\Delta}$,
there are at least $\Delta\cdot\binom{2\delta/\sqrt{\Delta}}{2}$ cherries of vertices of $A$.

On the other hand, since $H$ is $C_4$--free, each pair of vertices
in $B$ has at most one common neighbor in $A$ having a cherry, thus
there are at most $\binom{\delta}{2}$ cherries of vertices of $A$, and
we have:
$$ 2\delta^2-\delta\sqrt{\Delta} = \Delta\binom{\frac{2\delta}{\sqrt{\Delta}}}{2} \leq   \binom{\delta}{2}
=\frac{ \delta^2-\delta}{2}
\;,$$
providing a contradiction.
\end{proof}

%%%%%%%%%%%%%%%%%%%%%%%%%%%%%%%%%%
%%%%%%%%%%%%%%%%%%%%%%%%%%%%%%%%%%
\section{Remarks and open questions}\label{sec:conc}
%%%%%%%%%%%%%%%%%%%%%%%%%%%%%%%%%%
%%%%%%%%%%%%%%%%%%%%%%%%%%%%%%%%%%

\begin{enumerate}
\item There are still logarithmic gaps between the lower bounds
  (Theorem~\ref{thm:edges} and Corollary~\ref{cor:deg_C4}) and the upper
  bounds for $f$ and $h$. We conjecture that the upper bounds are
  asymptotically tight.

\item In order to give a more explicit result in
  Theorem~\ref{thm:edges} it is interesting to determine the value
  $k^*=k^*(m,\Feven)$ that minimizes
  $\ex(K_{k,m/k},\Feven)$. It is clear that for every $r\geq 2$, $k^* =
  \Omega(m^{1/3})$ and $k^*=O(m^{2/3})$. Indeed, any extremal bipartite
  $\F$--free graph has at least as many edges as the size of the
  largest stable set which is $\Omega(m^{2/3})$ in both previous
  cases.

In the proof of Corollary~\ref{cor:edges_C4}, we showed that $k^*(m,C_4)=\Theta(m^{1/3})$.

Observe that
$$
\lim_{r\to\infty} k^{*}(m,\F_r)=m^{1/2}\;,
$$ When $r$ tends to infinity, $\F_r$ is composed of all
cycles of length up to $2r+1$, and thus, the extremal graph tends to a tree. In this case, the number
of edges is of the order of the number of vertices in the graph,
which is minimized when both stable sets are of the same
size approximately. Thus, we get that for every $r\geq 2$,
$$
f(m,r)= \Theta(f(m,\F_r)) = \Omega\left(\frac{m^{1/2}}{\log{m}}\right)\;.
$$ However this is meaningless since it is clear that any graph $G$
with $m$ edges has a spanning forest with at least $\Omega(m^{1/2})$
edges: such a $G$ contains a star with $\sqrt{m}$ edges, or a
matching of size $\Omega(m^{1/2})$.

\item Recently, Conlon, Fox and Sudakov showed in~\cite{cfs2014} that for every $r\geq 2$, $f(m,K_{r,r})=\Theta( m^{r/r+1})$. For $r=2$, this result improves Corollary~\ref{cor:edges_C4} by a logarithmic factor. It is worth mentioning that they were able to determine $f(m,K_{r,r})$ even though the Tur\'an numbers for $K_{r,r}$ are not known.

\item Regarding the function $h(\delta,\Delta,\F)$, we conjecture that the following holds:
\begin{conjecture}\label{conj:general}
 For every $\delta$, $\Delta$ and every family $\F$, we have
$$
h(\delta,\Delta,\F)=\Omega\left( \frac{\ex(K_\Delta,\F)\delta}{\Delta^2}\right)\;.
$$
\end{conjecture}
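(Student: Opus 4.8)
The natural route is to push the embedding method of Theorem~\ref{thm:deg} from short cycles to an arbitrary family $\F$, while also trying to shed the logarithmic factor and the hypothesis $\ex(K_\Delta,\F_r)\delta\geq\alpha\Delta^2\log^4\Delta$. Write $q:=\Theta(\ex(K_\Delta,\F)/\Delta)$, so that the conjectured bound reads $h(\delta,\Delta,\F)=\Omega(q\delta/\Delta)$. First I would fix a host graph $\G$: starting from an $\F$-free graph on $k=\Theta(\Delta)$ vertices with $\ex(K_k,\F)$ edges and iteratively deleting vertices of degree below $q$ (an $\F$-free graph of order $\geq k$ always has at least $\ex(K_k,\F)\geq\ex(K_\Delta,\F)$ edges, since $\ex(K_n,\F)$ is nondecreasing in $n$), one obtains a nonempty $\F$-free $\G$ with $|V(\G)|=\ell=\Theta(\Delta)$ and $\delta(\G)\geq q$; passing first to the connected members of $\F$, if needed, does not change the order of $\ex(K_\Delta,\F)$. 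Then colour $V(G)$ uniformly with $\ell$ colours to get $\chi$ and form $H'=H'_{(\chi,\G)}$ as in Definition~\ref{def:Gprime}. Each edge of $G$ survives in $H'$ with probability at least $\delta(\G)/\ell\geq q/\ell=\Omega(\ex(K_\Delta,\F)/\Delta^2)$, so $\E(d_{H'}(v))=\Omega(\ex(K_\Delta,\F)\,d_G(v)/\Delta^2)$; the weighted Local Lemma (Lemma~\ref{lem:WLLL}), applied exactly as in the proof of Theorem~\ref{thm:deg} with the same Type~A and Type~B events, then yields with positive probability a colouring for which every vertex keeps a constant fraction of its expected degree in $H'$ and $\chi$ is $t$-frugal in $H'$ for a suitable~$t$.

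The crux is the removal of forbidden subgraphs. In the cycle case this is Lemma~\ref{lem:better_frugal}: a second random sparsification makes $\chi$ $1$-frugal on every pair of colour classes of $H$, so that any short cycle of $H$ must repeat a colour, and then \emph{no rainbow short cycle} plus \emph{no inner-rainbow short path} --- properties inherited from $g(\G)\geq 2r+2$ --- force a contradiction. For a general $F\in\F$ the obstruction is that a copy of $F$ in $H'$ need only map \emph{homomorphically}, not injectively, into $\G$, so $\F$-freeness of $\G$ alone does not exclude it. What I would try is: (i) run the same matching-type cleanup so that in the resulting $H$ the colouring is locally injective at every vertex of degree $\geq 2$ in any embedded copy of $F$; (ii) enumerate the boundedly many \emph{folds} $F'$ of the members of $\F$ --- proper homomorphic images whose fibres are independent sets and which are realisable under a locally injective colouring --- and, when choosing $\G$, additionally require $\G$ to be $\{F':F'\text{ a fold of some }F\in\F\}$-free. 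The point is then to show that one can still take $\delta(\G)=\Theta(q)$ after forbidding the folds, i.e.\ that $\ex(K_n,\F\cup\{\text{folds}\})=\Theta(\ex(K_n,\F))$; for cycles this holds because the incidence graphs of generalized polygons are bipartite, hence fold-free. Establishing such a \emph{folds come for free} statement in general --- or otherwise controlling non-injective copies during the cleanup --- is where I expect the real work to be, and it may well fail for pathological~$\F$.

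For this reason a realistic first target is the conjecture for $2$-connected families $\F$: there every vertex of every $F$ has degree $\geq 2$, local injectivity is automatic after the cleanup, and the fold analysis closely parallels the cycle argument, so the same plan should go through with at worst a constant-factor loss. Independently, I would revisit Theorem~\ref{thm:deg} to remove the side condition: that condition and the $\log\Delta$ loss both originate from needing the frugality parameter $t$ to be of order $\log\Delta$ so that the $\binom{\Delta}{t}$ Type~B dependencies are beaten; a more local cleanup (sparsifying within bounded-radius balls rather than globally over colour pairs, or a weighted variant of Lemma~\ref{lem:better_frugal} whose degree-loss factor is $O(1)$ rather than $\Theta(t)$) would simultaneously remove the hypothesis and the logarithm. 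Combining a constant-factor cleanup with the $2$-connected case would already give the conjecture for $\F_r$ in full; the fully general statement would then hinge on the fold-freeness question above.
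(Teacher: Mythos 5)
This statement is Conjecture~\ref{conj:general}, which the paper explicitly leaves open: there is no proof in the paper to compare against, and the authors' own assessment is only that Theorem~\ref{thm:deg} establishes it ``up to a logarithmic factor'' for $\F=\F_r$ under a side condition on $\delta,\Delta$. Your proposal is the natural program --- push the embedding-into-$\G$ method of Theorem~\ref{thm:deg} from short cycles to general $\F$ --- but, as you yourself flag, it is a sketch with two genuine gaps rather than a proof.

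The first gap is the ``folds come for free'' claim, which is exactly where the argument breaks for general $\F$. After the matching-type cleanup the colouring is $1$-frugal, so a copy of $F\in\F$ in the resulting $H$ maps to $\G$ via a \emph{locally injective} homomorphism, but not necessarily an injective one; to rule it out you must choose $\G$ to avoid every locally injective image (fold) of every member of $\F$. You therefore need $\ex(K_n,\F\cup\{\text{folds of }\F\})=\Theta(\ex(K_n,\F))$, and nothing in the paper supplies that. It holds for cycles because the relevant folds of $C_l$ are shorter closed walks, which girth-$(2r+2)$ hosts exclude --- this is precisely what the rainbow-cycle/inner-rainbow-path analysis in Lemma~\ref{lem:better_frugal} encodes --- but for other bipartite $\F$ (e.g.\ $\{K_{s,t}\}$) the fold family is richer, and it is entirely open whether forbidding it destroys the density of $\G$. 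You correctly identify this as ``where the real work'' is, which is another way of saying it is not done.

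The second gap is that even for $\F_r$ the proposal does not remove the $\log\Delta$ loss or the hypothesis $\ex(K_\Delta,\F_r)\delta\geq\alpha\Delta^2\log^4\Delta$; it only asserts that ``a more local cleanup'' would. Both the logarithm and the side condition come from two linked places in the paper's machinery: the cleanup of Lemma~\ref{lem:better_frugal} retains each edge with probability only $\geq 1/(2t-1)$, so $t$-frugality costs a $\Theta(t)$ degree factor, and the Local Lemma in the proof of Theorem~\ref{thm:deg} needs $t=\Theta(\log\Delta)$ to beat the $\binom{\Delta}{t}$ Type~$B$ dependencies. A cleanup losing only $O(1)$ in degree while still killing all short non-rainbow configurations would be a genuinely new tool, and you have not constructed it. Consequently the claimed conclusion --- that a constant-factor cleanup plus the $2$-connected case ``would already give the conjecture for $\F_r$ in full'' --- rests on two unproven ingredients, and so the conjecture remains exactly as open as the paper leaves it.
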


Proposition~\ref{prop:non_bip} shows that this conjecture is true for
$\F$ not containing any bipartite graph. The bipartite case remains wide open. Our results imply that, for any $r\geq 2$, Conjecture~\ref{conj:general} is true up to a logarithmic factor when $\F$ contains only the bipartite graphs in $\F_r^{\text{even}}$ --- if the maximum degree is not too large with respect the minimum degree. In particular, it would be interesting to set the conjecture for other important families of bipartite graphs, such as complete bipartite graphs.

\end{enumerate}

\begin{acknowledgement}
The authors would like to thank Benny Sudakov for his helpful remarks, and both referees for their work: their suggestions have improved the quality of the paper.
\end{acknowledgement}

\bibliography{girth}
\bibliographystyle{amsplain}

\end{document}